\documentclass[12pt,reqno]{amsart}
\usepackage[margin=1in]{geometry}

\usepackage{macros}

\AddToHook{env/prop/begin}{\crefalias{theorem}{prop}}
\AddToHook{env/cor/begin}{\crefalias{theorem}{cor}}
\AddToHook{env/lem/begin}{\crefalias{theorem}{lem}}
\AddToHook{env/que/begin}{\crefalias{theorem}{que}}
\AddToHook{env/conj/begin}{\crefalias{theorem}{conj}}
\AddToHook{env/assu/begin}{\crefalias{theorem}{assu}}
\AddToHook{env/defn/begin}{\crefalias{theorem}{defn}}
\AddToHook{env/nota/begin}{\crefalias{theorem}{nota}}
\AddToHook{env/cond/begin}{\crefalias{theorem}{cond}}
\AddToHook{env/egz/begin}{\crefalias{theorem}{egz}}
\AddToHook{env/prob/begin}{\crefalias{theorem}{prob}}
\AddToHook{env/rmk/begin}{\crefalias{theorem}{rmk}}

\begin{document}

\title[Ray class monoids for orders]{Ray class monoids for orders of number fields}
\author{Gene S. Kopp}
\address{Department of Mathematics, Louisiana State University, Baton Rouge, LA, USA}
\email{kopp@math.lsu.edu}

\subjclass[2020]{11R37 (primary), 11R54 (secondary)}

\keywords{orders of number fields, monoid, class field theory, ray class group, ring class group, Clifford monoid}

\date{October 8, 2026}

\begin{abstract} 
We define and study six \emph{ray class monoids} associated to a level datum $(\mathcal{O}; \mathfrak{m}, \Sigma)$, where $\mathcal{O}$ is an order in a number field $K$, $\mathfrak{m}$ is an $\mathcal{O}$-ideal, and $\Sigma$ is a set of real places of $K$. The monoids extend the ray class group (and the ring class group) by including non-invertible classes consisting of non-invertible ideals and ideals not coprime to the modulus. We prove fundamental results on the structure of these ray class monoids.
\end{abstract}

\maketitle

\section{Introduction}\label{sec:introduction}

In abstract class field theory, standard ray class groups---whose elements are ray classes of ideals of the maximal order coprime to a modulus $\mm$---are sufficient describe the Galois groups of a cofinal set of class fields; nonetheless, in many situations, one encounters non-maximal orders, which contain non-invertible ideals, and one encounters ideals not coprime to a relevant modulus or conductor. Such situations arise in explicit class field theory, the arithmetic of abelian varieties, arithmetic statistics, computational number theory, and applications of number theory to cryptography and physics. In those situations, one may still talk about ray classes of ideals, but classes will not generally be invertible. Such classes live in a \emph{monoid} (semigroup with identity) rather than a group.

The present paper introduces and studies \emph{ray class monoids} of orders, which may allow \emph{singular ideal classes} (those whose ideals are non-invertible), \emph{imprimitive ideal classes} (those whose ideals are not coprime to the ray class modulus $\mm$), or both.

\subsection{Taxonomy of ray class monoids}\label{subsec:11}

We describe six different ray class monoids (including the ray class group) obtained by relaxing the invertible ideal condition and the relaxing the coprimality condition in the definition of the ray class group. They are denoted and named as follows.
\begin{center}
\begin{tabular}{|c|c|}
\hline
\textbf{Monoid} & \textbf{Name} \\
\hline
$\Cl_{\mm,\rS}(\OO)= \Clmpa_{\mm,\rS}(\OO)$ & \rcg\ = \rcmpadummy\\
$\Clmpb_{\mm,\rS}(\OO)$ & \rcmpb \\
$\Clmpc_{\mm,\rS}(\OO)$ & \rcmpc \\
$\Clmia_{\mm,\rS}(\OO)$ & \rcmia \\
$\Clmib_{\mm,\rS}(\OO)$ & \rcmib \\
$\Clmic_{\mm,\rS}(\OO)$ & \rcmic \\
\hline
\end{tabular}
\end{center}

These monoids are related by the following inclusions. 
\begin{equation}\label{eq:inclusions}
\begin{tikzcd}
\Clmia_{\mm,\rS}(\OO) \arrow[r,hook] & 
\Clmib_{\mm,\rS}(\OO) \arrow[r,hook] & 
\Clmic_{\mm,\rS}(\OO) \\
\Cl_{\mm,\rS}(\OO) = \Clmpa_{\mm,\rS}(\OO) \arrow[r,hook] \arrow[u,hook] 
& \Clmpb_{\mm,\rS}(\OO) \arrow[r,hook] \arrow[u,hook] 
& \Clmpc_{\mm,\rS}(\OO) \arrow[u,hook]
\end{tikzcd}
\end{equation}
All monoids in this diagram are finite sets, as shown in \Cref{thm:monfin}.

\subsection{Prior work}\label{subsec:12}

The study of semigroups and monoids of ideal classes of orders in number fields was initiated by Dade, Taussky and Zassenhaus \cite{DadeTausskyZassenhaus} in 1962. (They treated Noetherian integral domains, discussed in \Cref{sec:ideals}.) They introduced notions corresponding to the ideal class group and ideal class monoid of an order. 

Previous work of the author and Lagarias \cite{KL-cft} defined and studied ray class groups for non-maximal orders. Consider a \emph{level datum} $(\OO; \mm, \rS)$, where $\OO$ is an order in a number field $K$, $\mm$ is an $\OO$-ideal, and $\rS$ is a subset of $\{\mbox{embeddings } K \inj \R\}$. The associated ray class group is defined to be
\begin{equation}\label{eq:cldef}
\Cl_{\mm,\rS}(\OO) = \frac{\rJ^\ast_\mm(\OO)}{\rP_{\mm,\rS}(\OO)},
\end{equation}
where
\begin{align}
\rJ^\ast_\mm(\OO) &= \{\mbox{invertible fractional $\OO$-ideals coprime to $\mm$}\}; \\
\rP_{\mm,\rS}(\OO) &= \{\alpha\OO : \alpha \equiv 1 \Mod{\mm} \mbox{ and } \rho(\alpha)>0 \mbox{ for all } \rho \in \rS\}.
\end{align}
(A fractional ideal $\cc$ is \emph{coprime to $\mm$} if it can be written as $\cc = \aa\bb^{-1}$ for an $\OO$-ideal $\aa$ and an invertible $\OO$-ideal $\bb$ such that $\aa+\mm = \bb+\mm = \OO$.)

\subsection{Contents of paper}\label{subsec:13}

\Cref{sec:ideals} provides a summary of concepts from commutative algebra used in this paper, with a focus on Noetherian integral domains of dimension $1$ and more specifically orders in number fields. This summary includes a discussion of the fractional ideal properties \emph{invertible}, \emph{ord-invertible}, \emph{divisorial}, and \emph{ord-divisorial}. It also examines the implications of ring properties such as \emph{regular} and \emph{Gorenstein} in the case of orders. Some readers may prefer to skip \Cref{sec:ideals} and return to it as needed.

\Cref{sec:primitive} defines the primitive ray class monoids $\Clmpb_{\mm,\rS}(\OO)$ and $\Clmpc_{\mm,\rS}(\OO)$, obtained by relaxing the invertible ideal condition only, and proves results about their structure. These monoids can be defined as the quotient of a monoid of ideals by (the action of) a certain subgroup of principal ideals. The \rcmpb\ $\Clmpb_{\mm,\rS}(\OO)$ has a particularly nice structure, that of a \emph{Clifford monoid}, characterized by a decomposition as a union of groups indexed by idempotents.

\Cref{sec:imprimitive} defines the imprimitive ray class monoids $\Clmia_{\mm,\rS}(\OO)$, $\Clmib_{\mm,\rS}(\OO)$, and $\Clmic_{\mm,\rS}(\OO)$, obtained by relaxing the coprimality condition (as well as, in the latter two cases, the invertible ideal condition). These monoids require more complicated definitions, in which an ideal monoid is quotiented by an equivalence relation not described by a subgroup action. These monoids are not (generally) Clifford monoids.

\Cref{sec:further} proves that all the ray class monoids under consideration are finite sets.

\Cref{sec:moduli} provides a moduli interpretation of the imprimitive flat ray class monoid. 

\Cref{sec:conclusion} provides some concluding remarks, including a discussion of applications of ray class monoids in current and future work.

\section{Ideal and order properties}\label{sec:ideals}

In this section, we provide a concise overview of desirable properties for ideals of orders, and orders themselves, along with counterexamples. We categorize nonzero fractional ideals of orders based on a hierarchy of ``smoothness'' that will be part of the basis of our hierarchy of ray class monoids. From strongest to weakest, ideals of orders can be invertible (or smooth), ord-invertible (or ord-smooth), or ord-divisorial; they can also be divisorial (implied by ``invertible,'' incomparable with ``ord-invertible'' and ``ord-divisorial''), or they can satisfy none of these properties. We also discuss some properties of orders, from strongest to weakest: maximal, Bass, and Gorestein; orders exist that satisfy none of these. We focus on stating results and giving illustrative examples, outsourcing most proofs as references to the literature.

While the focus is on orders of number fields, we give some results in greater generality, that of Noetherian integral domains of dimension $1$.

\subsection{Types of rings}\label{subsec:21}

In this paper, a \emph{ring} is a commutative ring with unity. The terms \emph{integral domain}, \emph{local ring}, \emph{Noetherian ring}, \emph{discrete valuation ring}, \emph{Krull dimension} (or simply \emph{dimension}), and \emph{integral closure} (of an integral domain in its fraction field) have their standard definitions; see \cite{AM}. 

This paper focuses on orders of number fields; some of its results hold for the larger class of Noetherian integral domains of dimension $1$. A Noetherian integral domain of dimension $1$ is a Noetherian integral domain in which every nonzero prime ideal is maximal.

An \emph{order} in a number field $K$ is subring of $K$ having rank $[K:\Q]$ as a $\Z$-module. All orders $\OO$ of number fields satisfy several special ring properties:
\begin{enumerate}
\item $\OO$ is a Noetherian integral domain of dimension $1$.
\item $\OO$ has finite index as an additive subgroup of its integral closure.
\item The fraction field of $\OO$ has characteristic zero.
\end{enumerate}

The following ring properties do not hold for all orders. They hold in special cases of interest in this paper.
\begin{enumerate}
\item A ring is a \emph{Dedekind domain} if it is a Noetherian integral domain of dimension $1$ that is integrally closed in its quotient field.
\item A local ring is \emph{regular} if its maximal ideal is generated by $n$ elements, where $n$ is the (Krull) dimension of the ring and $n < \infty$.
\item A ring $R$ is \emph{regular} if $R_\mm$ is a regular local ring for every maximal ideal $\mm$ of $R$.
\item A local ring $R$ is \emph{Gorenstein} if $R$ has finite injective dimension as an $R$-module.
\item A ring $R$ is \emph{Gorenstein} if $R_\mm$ is a Gorenstein local ring for every maximal ideal $\mm$ of $R$.
\end{enumerate}

\subsection{Integral and fractional ideals of Noetherian integral domains}\label{subsec:22}

For any Noetherian integral domain $\DD$ with fraction field $K$, we can talk about integral and fractional ideals. An \emph{integral ideal} is simply an ideal, that is, a $\DD$-submodule of $\DD$. A \emph{fractional ideal} is a finitely generated $\DD$-submodule of $K$. Fractional ideals have sum, product, intersection, and quotient (colon) operations; the {\em ideal quotient} is defined as
\begin{equation}
\colonideal{\aa}{\bb} = \{x \in K : x\bb \subseteq \aa\}.
\end{equation}
Some special cases of the ideal quotient are the \emph{multiplier ring} is $\ord(\aa) = \colonideal{\aa}{\aa}$ and, for orders $\OO \subseteq \OO' \subseteq \OO_K$, the \emph{relative conductor} $\ff_{\OO'}(\OO) = \colonideal{\OO'}{\OO}$ and the \emph{(absolute) conductor} $\ff(\OO) = \ff_{\OO_K}(\OO)$. See \cite[Sec.~2--3]{KL-cft} for a detailed treatment of integral and fractional ideals of orders of number fields.

We can define coprimality for pairs of fractional ideals in a Noetherian integral domain.
\begin{defn}\label{defn:coprime}
Let $\DD$ be a Noetherian integral domain and $\cc_1, \cc_2$ be fractional $\DD$-ideals. The ideals $\cc_1$ and $\cc_2$ are \emph{coprime} (or $\cc_1$ is \emph{coprime to} $\cc_2$) if they can be written as $\cc_i = \aa_i\bb_i^{-1}$ with $\aa_i$, $\bb_i$ integral ideals satisfying $\aa_1+\aa_2 = \aa_1+\bb_2 = \bb_1+\aa_2 = \bb_1+\bb_2 = \DD$.
\end{defn}

\subsection{Localization and semi-localization} \label{subsec:23}

We recall the standard definition of localization of a ring (both in general and at a prime ideal) in order to introduce the less common notion of \emph{semilocalization}. A \emph{semilocal ring} is a ring with finitely many maximal ideals.

\begin{defn}
Let $R$ be a ring and $S \subseteq R$. The \emph{localization of $R$ by $S$} is the ring
\begin{equation}
R[S^{-1}]
:= \frac{\left\{\sum_{i=1}^n r_i s_i^\ast : r_i \in R, s_i \in S\right\}}{\langle s s^\ast - 1 : s \in S\rangle}
\end{equation}
obtained by formally adjoining inverses of all the elements of $S$. If $\DD$ is an integral domain, and $\pp$ is a prime ideal of $\DD$, the \emph{localization of $\DD$ at $\pp$} is $\DD_\pp = \DD[(\DD \setminus \pp)^{-1}]$. For any $\DD$-module $M$, we also write $M_\pp := \DD_\pp M$ (the extension of scalars of $M$ by $\DD_\pp$, which is a $\DD_\pp$-module). In particular, the \emph{fraction field} of $\DD$ is $\DD_{\mathbf{0}}$, where $\mathbf{0}$ is the zero ideal. If $\DD$ is Noetherian, and $\aa$ is a fractional $\DD$-ideal such that $\aa \neq \DD$, we define
\begin{equation}
S_\aa = \{s \in \DD : s\DD \mbox{ is coprime to } \aa\},
\end{equation}
and the \emph{semilocalization of $\DD$ away from $\aa$} is $\DD[S_\aa^{-1}]$.
In the special case where $\pp$ is a maximal ideal, note that $\DD_\pp = \DD[S_\pp^{-1}]$.
\end{defn}

\begin{eg}
If $\DD = \Z$ and $\aa = \frac{2}{3}\Z$, then $n\Z$ is coprime to $\aa$ if and only if $2 \ndiv n$ and $3 \ndiv n$. We have
\begin{equation}
\DD[S_\aa^{-1}] = \Z\!\left[\tfrac{1}{p} : p \mbox{ prime, } p \neq 2,3\right] = \left\{\tfrac{a}{b} : a,b \in \Z, \, \gcd(b,6)=1\right\}.
\end{equation}
\end{eg}

\subsection{Primary decomposition}\label{subsec:24}

\begin{defn}
Let $\DD$ be a Noetherian integral domain of dimension $1$ and $\aa$ a nonzero fractional $\DD$-ideal. The \emph{support} of $\aa$ is
\begin{equation}
\supp(\aa) = \supp_\DD(\aa) = \{\pp \mbox{ nonzero prime $\DD$-ideal} : \aa_\pp \neq \DD_\pp\}.
\end{equation}
\end{defn}

While non-maximal orders of number field generally do not have prime factorization of ideals, they do have unique primary decomposition. Here a \emph{primary ideal} $\qq$ is one for which the radical $\rad(\qq)$ is prime, or equivalently, $\supp(\qq)$ is a singleton.

\begin{thm}[primary decomposition]\label{thm:primary}
Let $\DD$ be a Noetherian integral domain of dimension $1$ and $\cc$ a nonzero integral $\DD$-ideal. Then $\cc$ has a unique primary decomposition
\begin{equation}
\cc = \bigcap_{i=1}^n \qq_i = \prod_{i=1}^n \qq_i,
\end{equation}
in which the $\qq_i$ are primary ideals whose radicals $\pp_i = \rad(\qq_i)$ are pairwise distinct.
More generally, if $\cc$ is a nonzero fractional $\DD$-ideal, then there is a unique decomposition
\begin{equation}
\cc = \prod_{i=1}^n \qq_i,
\end{equation}
where $\supp(\qq_i) = \pp_i$ are pairwise distinct and $\supp(\cc) = \{\pp_1, \ldots, \pp_n\}$.
\end{thm}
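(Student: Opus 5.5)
For the integral case I will invoke the Lasker--Noether theorem for the Noetherian ring $\DD$. It says that $\cc$ has a minimal primary decomposition $\cc = \bigcap_{i=1}^n \qq_i$ with pairwise distinct radicals $\pp_i = \rad(\qq_i)$. Since $\DD$ is a domain and $\cc \neq 0$, every associated prime of $\DD/\cc$ contains $\cc$ and so is nonzero. Dimension $1$ then makes each $\pp_i$ maximal. Hence there are no embedded primes, all associated primes are minimal, and the second uniqueness theorem makes every component $\qq_i$ unique (not just the radicals). One can also see this directly: $\qq_i = \cc\DD_{\pp_i} \cap \DD$.

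Next I show that the intersection equals the product. The radicals $\pp_i$ are distinct maximal ideals, so $\rad(\qq_i + \qq_j)$ contains $\pp_i + \pp_j = \DD$ for $i \neq j$. Thus the $\qq_i$ are pairwise comaximal, and the standard Chinese-remainder lemma gives $\bigcap \qq_i = \prod \qq_i$. I also check that $\supp(\cc) = \{\pp_1,\dots,\pp_n\}$. Localizing at a maximal ideal $\pp$ gives $(\qq_i)_\pp = \DD_\pp$ if $\pp \neq \pp_i$ and $(\qq_i)_\pp \neq \DD_\pp$ if $\pp = \pp_i$. Since localization commutes with finite intersections, $\cc_\pp = (\qq_i)_\pp$ when $\pp = \pp_i$, and $\cc_\pp = \DD_\pp$ otherwise.

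For the fractional case, I first make precise what $\qq_i$ are allowed to be: nonzero fractional ideals with $\supp(\qq_i) = \{\pp_i\}$. Existence: choose a nonzero $d \in \DD$ with $d\cc \subseteq \DD$, which is possible since $\cc$ is finitely generated. A plain product of the form $(d\cc)(d\DD)^{-1}$ does not split correctly by prime, so instead I use local data. For each maximal $\pp$ in the finite set $\supp(d\cc) \cup \supp(d\DD)$, define
\[
\qq_\pp = \{x \in K : x \in \cc_\pp,\ x \in \DD_{\pp'} \text{ for all maximal } \pp' \neq \pp\} = \cc_\pp \cap \textstyle\bigcap_{\pp'\neq\pp}\DD_{\pp'}.
\]
This is a fractional ideal whose localizations are $\cc_\pp$ at $\pp$ and $\DD_{\pp'}$ elsewhere. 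Verifying this localization claim uses the fact that a fractional ideal is determined by its localizations, $\aa = \bigcap_{\pp}\aa_\pp$. The product $\prod_\pp \qq_\pp$ then has the same localization as $\cc$ at every maximal ideal, because at each prime all factors but one are trivial. Therefore the product equals $\cc$. Discarding the factors with $\cc_\pp = \DD_\pp$ gives support exactly $\supp(\cc)$.

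Uniqueness runs the same way. If $\cc = \prod \qq_i$ with distinct singleton supports, localizing at $\pp_i$ forces $(\qq_i)_{\pp_i} = \cc_{\pp_i}$, and $(\qq_i)_{\pp'} = \DD_{\pp'}$ for every $\pp' \neq \pp_i$. So $\qq_i$ is determined by its localizations. It also forces $\{\pp_i\} = \supp(\cc)$. In the integral case this local description agrees with the primary components found above.

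The main obstacle is the local--global step: showing that $\qq_\pp$ is finitely generated and has the stated localizations. The difficulty is that the intersection is over infinitely many primes. My first approach is to squeeze $\qq_\pp$ between two fractional ideals, $\cc \cap \DD \subseteq \qq_\pp \subseteq d^{-1}\DD$, and use that $\DD$ is Noetherian to get finite generation. Then for each $\pp' \neq \pp$ I pick an element of $\pp^{N}$ that is a unit at $\pp'$; this comes from comaximality of powers of distinct maximal ideals. That element lets me show $(\qq_\pp)_{\pp'} \supseteq \DD_{\pp'}$, and similarly $(\qq_\pp)_\pp \supseteq \cc_\pp$.
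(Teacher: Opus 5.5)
Your proof is correct, but for fractional ideals it takes a genuinely different route from the paper.

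\textbf{Integral case.} This matches the paper, which simply cites Atiyah--Macdonald Prop.~9.1. That result is exactly Lasker--Noether together with the observation that in dimension one every associated prime is maximal, so the components are isolated and pairwise comaximal.

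\textbf{Fractional case: the paper's route.} The paper writes $\cc=\aa\bb^{-1}$ with $\aa$ integral and $\bb$ invertible integral. It takes the primary decompositions of $\aa$ and $\bb$, pairs the $\pp$-components, and cancels them at primes where $\aa_\pp=\bb_\pp$. So, contrary to your remark, $(d\cc)(d\DD)^{-1}$ does split by prime once numerator and denominator are both decomposed. The only extra input is that each primary component of the invertible ideal $d\DD$ is locally principal, hence invertible. The $\pp$-factor is then $\qq^{(\aa)}_\pp\bigl(\qq^{(\bb)}_\pp\bigr)^{-1}$.

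\textbf{Fractional case: your route.} You instead define the $\pp$-factor intrinsically as $\cc_\pp\cap\bigcap_{\pp'\neq\pp}\DD_{\pp'}$. You then check by hand that it is finitely generated, via the squeeze $\cc\cap\DD\subseteq\qq_\pp\subseteq d^{-1}\DD$, and that it has the stated localizations. These checks go through as you sketch them. The one fact you leave implicit is that a nonzero ideal of the one-dimensional Noetherian local domain $\DD_\pp$ contains a power of $\pp\DD_\pp$; this gives $\pp^N\subseteq\cc_\pp$ for the $\pp'\neq\pp$ step, and the analogous fact at the primes containing $d$ gives the $\pp$ step.

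\textbf{Comparison.} The paper's argument is shorter and reuses the integral theorem. However, it leaves uniqueness in the fractional case unaddressed, and its factors a priori depend on the chosen $\aa,\bb$. Your argument costs a local--global patching step. In return it gives a canonical formula for each factor, so uniqueness and independence of choices follow immediately from $\aa=\bigcap_\pp\aa_\pp$.
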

\begin{proof}
For integral ideals, see \cite[Prop.~9.1]{AM}. The statement for fractional ideals follows by writing $\cc = \aa\bb^{-1}$ for an integral $\DD$-ideal $\aa$ and an invertible integral $\DD$-ideal $\bb$, then computing the primary decompositions of $\aa$ and $\bb$. If $\pp \nin \supp(\cc)$, then $\aa_\pp=\bb_\pp$, so the factors corresponding to $\pp$ in the primary decompositions cancel; otherwise, $\aa_\pp\neq\bb_\pp$, so the corresponding factors do not cancel.
\end{proof}

\subsection{Invertibility of fractional ideals}\label{subsec:25}

We now present equivalent conditions for invertiblility of fractional ideals, and implications between some other ideal properties, for Noetherian integral domains of dimension $1$.

We first define different types of invertibility.

\begin{defn}
Let $\DD$ be a Noetherian integral domain, and $\aa$ be a fractional $\DD$-ideal. 
Let $\DD' = \ord(\aa) = \colonideal{\aa}{\aa}$, which is an order with $\DD \subseteq \DD'$.
\begin{itemize}
\item[(1)] $\aa$ is an \textit{invertible fractional $\DD$-ideal} if and only if there  exists a fractional $\DD$-ideal $\bb$ such that $\aa\bb=\DD$.
\item[(2)] $\aa$ is an \textit{ord-invertible fractional $\DD$-ideal} if and only if $\aa$ is invertible as a fractional $\DD'$-ideal.
\item[(3)] $\aa$ is an \textit{divisorial fractional $\DD$-ideal} if and only if $\colonideal{\DD}{\colonideal{\DD}{\aa}} = \aa$.
\item[(4)] $\aa$ is an \textit{ord-divisorial fractional $\DD$-ideal} if and only if $\colonideal{\DD'}{\colonideal{\DD'}{\aa}} = \aa$.
\end{itemize}
\end{defn}

The following lemma gives criteria for invertibility. 

\begin{lem}\label{lem:invertible}
Let $\DD$ be a Noetherian integral domain of dimension $1$, and let $\aa$ be a nonzero fractional $\DD$-ideal. The following are equivalent.
\begin{enumerate}
\item[(1)] $\aa$ is an invertible fractional $\DD$-ideal (meaning that there exists a fractional $\DD$-ideal $\bb$ such that $\aa\bb=\DD$).
\item[(2)] $\aa \colonideal{\DD}{\aa} = \DD$.
\item[(3)] For every prime ideal $\pp$, the $\DD_\pp$-ideal $\aa_\pp$ is principal.
\end{enumerate}
If $\aa = \pp$ is a prime ideal, then (1--3) are equivalent to
\begin{enumerate}
\item[(4)] $\DD_\pp$ is a regular ring. (Equivalently, $\DD_\pp$ is a discrete valuation ring.)
\end{enumerate}
If in addition $\DD = \OO$ is an order of a number field, then (1--4) are equivalent to
\begin{enumerate}
\item[(5)] $\pp \not\supseteq \ff(\OO)$.
\end{enumerate}
\end{lem}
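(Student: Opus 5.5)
The plan is to prove the cycle (1)$\Rightarrow$(2)$\Rightarrow$(3)$\Rightarrow$(1), then attach (4) for prime ideals, and finally (5) for orders.

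\emph{(1)$\Rightarrow$(2).} Suppose $\aa\bb=\DD$. Then $\bb\subseteq\colonideal{\DD}{\aa}$, and always $\aa\colonideal{\DD}{\aa}\subseteq\DD$. Hence $\DD=\aa\bb\subseteq\aa\colonideal{\DD}{\aa}\subseteq\DD$. The converse (2)$\Rightarrow$(1) is immediate, since $\colonideal{\DD}{\aa}$ is a fractional ideal: it contains $d^{-1}\cdot$ nothing smaller than $a^{-1}d\DD$ bounds, where one bounds it by $a^{-1}\DD$ for $0\neq a\in\aa$.

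\emph{(2)$\Rightarrow$(3).} Write $1=\sum a_ib_i$ with $a_i\in\aa$ and $b_i\in\colonideal{\DD}{\aa}$. Each $a_ib_i$ lies in $\DD$. In the local ring $\DD_\pp$, some $a_jb_j$ must be a unit, since otherwise the sum would lie in $\pp\DD_\pp$. For any $x\in\aa_\pp$ we then have $x=a_j\cdot (xb_j)(a_jb_j)^{-1}$, and $xb_j\in\DD_\pp$. So $\aa_\pp=a_j\DD_\pp$.

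\emph{(3)$\Rightarrow$(2).} Localization commutes with colon ideals for finitely generated modules over a Noetherian ring, so $(\aa\colonideal{\DD}{\aa})_\pp=\aa_\pp\colonideal{\DD_\pp}{\aa_\pp}=a\DD_\pp\cdot a^{-1}\DD_\pp=\DD_\pp$ for every $\pp$. An ideal of $\DD$ that is locally equal to $\DD$ at every maximal ideal equals $\DD$. Here dimension $1$ plays no real role.

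\emph{(4), for $\aa=\pp$.} First, $\pp_\qq=\DD_\qq$ for $\qq\neq\pp$: these are maximal ideals since the dimension is $1$, and the zero ideal is trivial. So (3) says exactly that $\pp\DD_\pp$ is principal. Since $\DD_\pp$ is a one-dimensional Noetherian local domain, this is precisely the statement that it is regular. The DVR equivalence is the standard result \cite[Prop.~9.2]{AM}.

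\emph{(5), for orders.} I expect this to be the main step needing care. The plan is to show that $\OO_\pp$ is a DVR if and only if $\OO_\pp=(\OO_K)_\pp$, and that the latter holds if and only if $\ff(\OO)\not\subseteq\pp$.

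\begin{itemize}
\item If $\ff\not\subseteq\pp$, choose $s\in\ff\setminus\pp$. Then $s\OO_K\subseteq\OO$, so $\OO_K\subseteq s^{-1}\OO\subseteq\OO_\pp$. Hence $\OO_\pp$ equals the integral closure localized, $(\OO_K)_\pp$, which is Dedekind and semilocal. A local Dedekind domain is a DVR.
\item Conversely, if $\OO_\pp$ is a DVR, it is integrally closed, so $\OO_K\subseteq\OO_\pp$. Since $\OO_K$ is finitely generated over $\OO$, there is a single $s\notin\pp$ with $s\OO_K\subseteq\OO$. Then $s\in\ff\setminus\pp$.
\end{itemize}

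Note that the statement writes $\pp\not\supseteq\ff(\OO)$, which matches this.
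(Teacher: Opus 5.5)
Your proof is correct, but it takes a different route from the paper's, which is almost entirely by citation. The paper cites \cite[Lem.~3.6, Prop.~3.8]{KL-cft} for (1)$\Leftrightarrow$(2) and (1)$\Leftrightarrow$(3), and \cite[Lem.~2.13]{KL-cft} for (1)$\Leftrightarrow$(5). It argues only (3)$\Leftrightarrow$(4) directly, by the same reduction you use: principality of $\pp\DD_\pp$, then \cite[Prop.~9.2]{AM}.

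You instead give the standard self-contained arguments:
\begin{itemize}
\item for (2)$\Rightarrow$(3), the observation that some $a_jb_j$ must be a unit in $\DD_\pp$;
\item for (3)$\Rightarrow$(2), the compatibility of $\colonideal{\DD}{\aa}$ with localization for finitely generated $\aa$;
\item for (5), you route through (4) rather than (1): $\OO_\pp$ is a DVR iff $\OO_K\subseteq\OO_\pp$ iff some $s\in\ff(\OO)\setminus\pp$ exists, using that $\OO_K$ is a finitely generated $\OO$-module.
\end{itemize}

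Your version makes visible that dimension $1$ plays no role in (1)--(3). It also shows that (4)$\Leftrightarrow$(5) rests only on $\OO_K$ being the integral closure and finite over $\OO$. The paper's version is shorter and leaves the order-specific facts to \cite{KL-cft}.

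One presentational fix: your sentence justifying (2)$\Rightarrow$(1) is garbled. All that is needed is $\colonideal{\DD}{\aa}\subseteq a^{-1}\DD$ for any $0\neq a\in\aa$. Hence $\colonideal{\DD}{\aa}$ is finitely generated because $\DD$ is Noetherian, so it is a fractional ideal and can serve as $\bb$.
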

\begin{proof}
The equivalence of (1) and (2) is a special case of \cite[Lem.~3.6]{KL-cft}. The equivalence of (1) and (3) is \cite[Prop.~3.8]{KL-cft}. 

Now we suppose that $\aa = \pp$ is a prime ideal and show that (3) is equivalent to (4). For primes $\qq \neq \pp$, we have $\pp_\qq = \DD_\qq$, so (3) simply says that $\pp_\pp$ is principal, that is, the maximal ideal of $\DD_\pp$ is principal. This is equivalent to the statement that $\DD_\pp$ is a discrete valuation ring \cite[Prop.~9.2]{AM}, and discrete valuation rings are the same as regular local rings of dimension one \cite[p.~123]{AM}.
For orders, the equivalence of (1) and (5) proven in \cite[Lem.~2.13]{KL-cft}.
\end{proof}

The next lemma gives inclusion relations among the different notions of invertibility.

\begin{lem}\label{lem:divisorial}
Let $\DD$ be a Noetherian integral domain of dimension $1$ having fraction field $K$, and let $\aa$ be a nonzero fractional $\DD$-ideal. Let $\DD' = \ord(\aa) = \colonideal{\aa}{\aa}$.
Consider the following conditions on the ideal $\aa$.
\begin{enumerate}
\item[(1)] $\aa$ is an invertible fractional $\DD$-ideal.
\item[(2)] $\aa$ is an ord-invertible fractional ideal (meaning that $\aa$ is invertible as a fractional $\DD'$-ideal).
\item[(2')] $\aa$ is an invertible fractional $\DD''$-ideal for some ring $\DD'' \subseteq K$.
\item[(3)] $\aa$ is an divisorial fractional $\DD$-ideal (meaning that $\colonideal{\DD}{\colonideal{\DD}{\aa}} = \aa$).
\item[(4)] $\aa$ is an ord-divisorial fractional ideal (meaning that $\colonideal{\DD'}{\colonideal{\DD'}{\aa}} = \aa$).
\item[(5)] $\aa$ is a divisorial fractional $\DD''$-ideal for some ring $\DD'' \subseteq K$ 
\end{enumerate}
Then:
\begin{itemize}
\item (1) $\implies$ (2), (2'), (3), (4), (5).
\item (2) $\iff$ (2').
\item (2) $\implies$ (4), (5) [so (2') $\implies$ (4), (5)].
\item (3) $\implies$ (5).
\item (4) $\implies$ (5).
\item If $\DD$ is an order in a number field, then (5) always holds. 
\item Counterexamples exist in the case where $\DD$ is an order in a number field showing that none of the other possible implications between these statements hold.
\end{itemize}
\end{lem}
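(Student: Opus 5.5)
We take the implications one at a time, then give counterexamples.

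\textbf{Routine implications.} If $\aa\bb=\DD$, then $\DD\subseteq\ord(\aa)$. Conversely, any $x\in\ord(\aa)$ satisfies $x\DD = x\aa\bb\subseteq\aa\bb=\DD$. Hence $\ord(\aa)=\DD$, and (1) gives (2) and (2') immediately.

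For (2') $\implies$ (2), suppose $\aa\bb=\DD''$. Then $\DD''$ is a ring containing $\DD$ with $\DD''\aa=\aa$. The same computation shows $\ord(\aa)=\DD''$, so $\DD''=\DD'$.

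For (1) $\implies$ (3), note that $\colonideal{\DD}{\aa}=\aa^{-1}$ by \Cref{lem:invertible}(2). Therefore $\colonideal{\DD}{\colonideal{\DD}{\aa}}=\colonideal{\DD}{\aa^{-1}}=\aa$. Applying the same argument over $\DD'$ gives (2) $\implies$ (4).

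(4) $\implies$ (5) is trivial with $\DD''=\DD'$, and (3) $\implies$ (5) is trivial with $\DD''=\DD$. Chaining these yields all the implications out of (1).

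\textbf{(5) for orders.} Take $\DD''=\OO_K$. The claim "$\aa$ is a divisorial $\OO_K$-ideal" only makes sense when $\aa$ is an $\OO_K$-module, so I expect the intended reading allows $\DD''$ to be any ring with $\aa$ a fractional $\DD''$-ideal.

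The natural choice is $\DD''=\ZZ$, or more precisely $\Z$. Then $\aa$ is a lattice of full rank. The trace-dual argument gives $\colonideal{\Z}{\colonideal{\Z}{\aa}}$ computed inside $K$; but $\colonideal{\Z}{\aa}=\{x\in K: x\aa\subseteq \Z\}$ is zero when $[K:\Q]>1$. So this choice fails.

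I would therefore instead take $\DD''=\ord(\aa)$ and try to show directly that every fractional ideal is ord-divisorial. That is false in general: it fails exactly when $\ord(\aa)$ is non-Gorenstein. This suggests (5) should be proved with a cleverer choice of ring.

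The plan is to pick $\DD''=\Z+\aa\aa'$-type rings, or more simply to use the trace-dual characterization. For an order $R$ with trace dual $R^\vee$, the dual $\colonideal{R^\vee}{\cdot}$ is always reflexive. One then seeks a ring $\DD''$ for which $\aa$ is "dual" in this sense.

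This is the main obstacle. I would first try a ring $\DD''$ such as $\ord(\aa^\vee\aa)$, and check reflexivity locally prime by prime.

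\textbf{Counterexamples.} These would be given explicitly in quadratic and cubic orders.

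(2) without (1) and (3): take $\OO=\Z[2i]$ and $\aa=\Z[i]$. Then $\aa$ is invertible over $\ord(\aa)=\Z[i]$, so (2) holds. But $\aa$ is not invertible over $\OO$, and $\colonideal{\OO}{\colonideal{\OO}{\aa}}$ can be computed to check (3).

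Since quadratic orders are Gorenstein, they make many ideals divisorial. Separating (3) from (2) and (4), and (4) from (2), therefore requires a non-Gorenstein order such as $\Z[2\sqrt[3]{2},\ldots]$ or $\Z+p\OO_K$ in a cubic field.

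For those, I would compute conductors and duals explicitly with small $\Z$-bases. For example, in $\Z+2\OO_K$ the maximal ideal $\pp=2\OO_K$ has $\ord(\pp)=\OO_K$, so (2) and (4) hold. One then checks that (3) fails because $\colonideal{\OO}{\colonideal{\OO}{\pp}}=\OO\neq\pp$ in the non-Gorenstein case.
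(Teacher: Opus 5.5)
Your routine implications are correct and essentially the paper's. Your (2')$\Rightarrow$(2) computation even shows $\DD''=\ord(\aa)$ outright; the preliminary assertion $\DD''\supseteq\DD$ is not given, but you never use it.

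The genuine gap is the claim that (5) always holds for orders. You do not prove it, and the rings you propose cannot work. Your own computation explains why: if $\aa$ is a fractional $\DD''$-ideal then $\DD''\aa\subseteq\aa$, so $\DD''\subseteq\ord(\aa)$. Now $\ord(\aa^\vee\aa)$ contains $\ord(\aa)$, since $\aa^\vee\aa$ is an $\ord(\aa)$-module. So $\OO_K$ and $\ord(\aa^\vee\aa)$ are admissible only when they equal $\ord(\aa)$. And $\ord(\aa)$ itself fails for the ideal $\qq$ of \Cref{eg:nonimp}: its multiplier ring $\OO$ is not Gorenstein, and $\qq$ is not $\OO$-divisorial.

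The missing idea is to go \emph{down} to a Gorenstein subring with the same fraction field. Pick $\alpha\in\DD$ with $K=\Q(\alpha)$. Then $\Z[\alpha]\subseteq\DD$ is monogenic, hence Gorenstein by \Cref{prop:monogenic}. Also $\aa$ is a finitely generated $\Z[\alpha]$-submodule of $K$, hence a fractional $\Z[\alpha]$-ideal, hence divisorial by \Cref{prop:gorenstein}. This is exactly the paper's argument.

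It also completes your trace-dual idea in a self-contained way. If $f$ is the minimal polynomial of $\alpha$, Euler's formula $\Z[\alpha]^\vee=f'(\alpha)^{-1}\Z[\alpha]$ gives $\colonideal{\Z[\alpha]}{\cc}=f'(\alpha)\cc^\vee$ for every $\Z[\alpha]$-stable lattice $\cc$. Hence $\colonideal{\Z[\alpha]}{\colonideal{\Z[\alpha]}{\aa}}=\aa^{\vee\vee}=\aa$.

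The counterexample sketches are also wrong.

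\emph{The quadratic example.} Since $\Z[2i]$ is Gorenstein, $\Z[i]$ \emph{is} $\Z[2i]$-divisorial: $\colonideal{\Z[2i]}{\Z[i]}=2\Z[i]$ and $\colonideal{\Z[2i]}{2\Z[i]}=\Z[i]$. This example shows that none of (2), (3), (4) implies (1), but it does not separate (2) from (3).

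\emph{The cubic example.} It fails for a structural reason: every nonzero prime $\pp$ of a one-dimensional Noetherian domain is divisorial. Indeed, $\colonideal{\DD}{\colonideal{\DD}{\pp}}$ is an integral ideal containing the maximal ideal $\pp$. It equals $\DD$ only if $\colonideal{\DD}{\pp}=\DD$, which is impossible in dimension one. Concretely, in $\OO=\Z+2\OO_K$ one has $\colonideal{\OO}{2\OO_K}=\OO_K$ and $\colonideal{\OO}{\OO_K}=2\OO_K$.

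The paper's witnesses live in $\OO=\Z[2\alpha,2\alpha^2]$ with $\alpha=\sqrt[3]{2}$:
\begin{itemize}
\item The superorder $\OO'=\Z[\alpha^2]$ is ord-invertible but $\colonideal{\OO}{\colonideal{\OO}{\OO'}}=\OO_K$, giving (2) without (3).
\item The non-maximal ideal $\qq=2\Z+2\alpha\Z+4\alpha^2\Z$ has $\ord(\qq)=\OO$ and is not $\OO$-divisorial, so it satisfies only (5).
\item The same $\qq$, viewed as an ideal of the Gorenstein suborder $\OO_1=\Z[2\alpha]$, gives (3) without (2) or (4).
\end{itemize}
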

\begin{proof}
We first show that (2) $\Leftrightarrow$ (2'). Suppose (2'), that $\aa$ is an invertible fractional $\DD''$-ideal. Then $\DD''\aa = \aa$ (by the definition of ``fractional ideal''), so $\DD'' \subseteq \colonideal{\aa}{\aa} = \DD'$. There exists a $\DD''$-ideal $\bb$ such that $\aa\bb = \DD''$, so $\aa(\bb\DD') = \DD''\DD' = \DD'$, and thus $\bb\DD'$ is the $\DD'$-inverse of $\aa$. The converse direction is immediate on taking $\DD'' = \DD'$.

The implication (1) $\shortimplies$ (2') is trivial (take $\DD'' = \DD$), so we also obtain (1) $\shortimplies$ (2).

To prove the (1) $\shortimplies$ (3), consider an invertible fractional $\DD$-ideal $\aa$. Then, using \cite[Lem.~3.9]{KL-cft},
\begin{equation}
\aa^{-1}\colonideal{\DD}{\colonideal{\DD}{\aa}} = \colonideal{\aa^{-1}}{\colonideal{\DD}{\aa}} = \colonideal{\aa^{-1}}{\aa^{-1}} = \colonideal{\aa^{-1}\aa}{\aa^{-1}\aa} = \colonideal{\DD}{\DD} = \DD,
\end{equation}
so $\colonideal{\DD}{\colonideal{\DD}{\aa}} = \aa$.

The prove that (2) $\shortimplies$ (4) is the same as the prove that (1) $\shortimplies$ (3), replacing $\DD$ by $\DD'$.

The implications (3) $\shortimplies$ (5) and (4) $\shortimplies$ (5) are trivial, and the remaining implications follow logically from those already proven.

If $\DD$ is an order in a number field $K$, choose any $\alpha \in \DD$ such that $K = \Q(\alpha)$. The order $\DD'' = \Z[\alpha]$ is a monogenic suborder of $\DD'$. By \Cref{prop:monogenic}, $\DD''$ is Gorenstein, so by \Cref{prop:gorenstein}, any nonzero fractional $\DD''$-ideal is divisorial. Thus, if $\aa$ is a nonzero fractional $\DD$-ideal, then $\aa$ is also a fractional $\DD''$-ideal, so $\aa$ is $\DD''$-divisorial.

Counterexamples ruling out the other possible implications are given in \Cref{eg:nonimp}.
\end{proof}

\subsection{Dedekind domains, Bass orders and Gorenstein orders}\label{subsec:26}

The next several results classify Noetherian integral domains and orders of number fields in terms of
ring-theoretic properties. 

\begin{prop}\label{prop:regular}
Let $\DD$ be a Noetherian integral domain of dimension $1$. Then the following are equivalent:
\begin{enumerate}
\item[(1)] $\DD$ is a Dedekind domain.
\item[(2)] $\DD$ is   a regular ring.
\item[(3)] $\DD_\pp$ is a discrete valuation ring for every nonzero prime ideal $\pp$.
\item[(4)] Every nonzero integral $\DD$-ideal is invertible.
\item[(5)] Every nonzero fractional $\DD$-ideal is invertible.
\item[(6)] Every primary $\DD$-ideal is a power of a prime ideal. (Thus, primary decomposition gives unique factorization of nonzero ideals into prime ideals.)
\end{enumerate}
If $\DD$ is an order in a number field $K$, then (1)--(6) are equivalent to
\begin{enumerate}
\item[(7)] $\DD = \OO_K$.
\end{enumerate}
\end{prop}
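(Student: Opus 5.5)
The plan is to prove a cycle of equivalences, leaning on \Cref{lem:invertible} and standard facts from \cite{AM}. The equivalence (1) $\iff$ (3) is the classical characterization of Dedekind domains among Noetherian domains of dimension $1$ \cite[Thm.~9.3]{AM}: integrally closed is equivalent to every localization at a nonzero prime being a discrete valuation ring. (2) $\iff$ (3) holds because a regular local ring of dimension $1$ is exactly a DVR \cite[p.~123]{AM}. Since $\DD$ has dimension $1$, each localization $\DD_\pp$ at a nonzero prime has dimension $1$, and localizing at the zero ideal gives the fraction field, which is trivially regular.

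Next, (3) $\iff$ (5) $\iff$ (4) comes from the local criterion (1) $\iff$ (3) of \Cref{lem:invertible}. If every $\DD_\pp$ is a DVR, then every nonzero fractional ideal localizes to a nonzero fractional ideal of a PID, so it is locally principal and hence invertible; this gives (3) $\implies$ (5). The implication (5) $\implies$ (4) is trivial. For (4) $\implies$ (3), apply (4) to each nonzero prime $\pp$ and use the prime-ideal clause (4) of \Cref{lem:invertible}, which says $\pp$ invertible iff $\DD_\pp$ is a DVR.

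For (6), I would argue (3) $\implies$ (6) and (6) $\implies$ (3).
\begin{itemize}
\item Suppose (3) holds and $\qq$ is $\pp$-primary. Then $\qq_\pp = \pp_\pp^k$ for some $k$. Both $\qq$ and $\pp^k$ are $\pp$-primary (powers of a maximal ideal are primary), and they agree at $\pp$ and are trivial at other primes. Primary ideals are determined by their localization at the radical ($\qq = \qq_\pp \cap \DD$), so $\qq = \pp^k$.
\item Conversely, suppose (6) holds. Take $x \in \pp_\pp \setminus \pp_\pp^2$. The ideal $x\DD_\pp \cap \DD$ is $\pp$-primary, hence equals some $\pp^k$, so $x\DD_\pp = \pp_\pp^k$. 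Since $x \notin \pp_\pp^2$, we get $k=1$. Thus $\pp_\pp$ is principal and $\DD_\pp$ is a DVR.
\end{itemize}
Here Nakayama's lemma guarantees $\pp_\pp \neq \pp_\pp^2$, so such an $x$ exists, and the one-dimensionality of $\DD_\pp$ makes the contraction $\pp$-primary. The parenthetical in (6) then follows from \Cref{thm:primary}.

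Finally, for orders, (1) $\iff$ (7) holds because $\OO_K$ is the integral closure of $\OO$ in $K$: $\OO$ is integrally closed exactly when $\OO = \OO_K$. Alternatively, one can use (5) of \Cref{lem:invertible}: all primes are invertible iff no prime contains $\ff(\OO)$, iff $\ff(\OO) = \OO$, iff $\OO = \OO_K$.

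I expect the main obstacle to be the step (6) $\implies$ (3). The other steps are citations or local-to-global bookkeeping, but this one requires producing a uniformizer from the primary-ideal hypothesis, so I would write it out carefully with the contraction argument above.
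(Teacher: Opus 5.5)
Your proof is correct. It reaches the equivalences by a somewhat different route from the paper's, which proceeds almost entirely by citation:
\begin{itemize}
\item \cite[Thm.~9.3]{AM} for (1)$\iff$(3)$\iff$(6);
\item \cite[Thm.~9.8]{AM} for (1)$\iff$(5);
\item \cite[p.~123]{AM} for (2)$\iff$(3);
\item a short reduction for (4)$\iff$(5), writing a fractional ideal as $\aa\bb^{-1}$ with $\aa$ integral and $\bb$ invertible, so that it is invertible exactly when $\aa$ is;
\item for (7), the fact that $\OO_K$ is Dedekind, together with the observation that for a non-maximal order the fractional ideal $\OO_K$ itself is not invertible, so (5) fails.
\end{itemize}

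You instead make (3) the hub and close the loop (3)$\implies$(5)$\implies$(4)$\implies$(3) with the paper's own \Cref{lem:invertible}, using local principality and the prime-ideal clause. This avoids \cite[Thm.~9.8]{AM} and the $\aa\bb^{-1}$ reduction. It also shows in passing that invertibility of the nonzero primes alone already forces $\DD$ to be Dedekind.

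Your hand proof of (3)$\iff$(6) is essentially the argument inside \cite[Thm.~9.3]{AM}: primary ideals correspond under localization and contraction, and Nakayama produces a uniformizer. It buys self-containment rather than anything new. One small point: the zero ideal is prime, hence primary, and your argument implicitly assumes $\pp\neq 0$, since (3) says nothing about $\DD_{(0)}$. That case is trivially $(0)=(0)^1$ and should just be dispatched separately.

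For (7), your observation that $\OO_K$ is the integral closure of $\OO$ in $K$ gives (1)$\iff$(7) in both directions at once, and is more direct than the paper's argument. The paper's route has the modest advantage of exhibiting an explicit non-invertible fractional ideal, which is the phenomenon the rest of the paper studies.
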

\begin{proof}
The equivalence of (1), (3), and (6) is \cite[Thm.~9.3]{AM}. The equivalence of (1) and (5) is \cite[Thm.~9.8]{AM}. The equivalence of (4) and (5) holds because: Any fractional ideal is of the form $\cc = \aa\bb^{-1}$ for an integral ideal $\aa$ and an invertible integral ideal $\bb$, so $\cc$ is invertible if and only if $\aa$ is invertible. The equivalence of (2) and (3) is shown on \cite[p.~123]{AM}.

For (7), the claim that $\OO_K$ is a Dedekind domain (so (1) holds) is \cite[Thm.~9.5]{AM}. 
Conversely, if $\DD$ is a non-maximal order in a number field $K$, then $\OO_K$ is a fractional $\DD$-ideal, and $\OO_K\bb$ is a fractional $\OO_K$-ideal (as well as a fractional $\DD$-ideal) for any fractional $\DD$-ideal $\bb$, so $\OO_K\bb \neq \OO$. So $\OO_K$ is not invertible as a fractional $\DD$-ideal, and (5) fails. Therefore, for orders of number fields, (7) is equivalent to the other statements.
\end{proof}

\begin{prop}\label{prop:gorenstein}
An order $\OO$ is Gorenstein if and only if every nonzero $\OO$-ideal is divisorial.
\end{prop}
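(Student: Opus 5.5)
The plan is to reduce to the local case, where Gorensteinness is the classical statement that the canonical module is free. By definition, $\OO$ is Gorenstein exactly when each localization $\OO_\pp$ is a Gorenstein local ring. Divisoriality also localizes. Since fractional ideals are finitely generated and $\OO$ is Noetherian, colon ideals commute with localization: $\colonideal{\OO}{\aa}_\pp = \colonideal{\OO_\pp}{\aa_\pp}$. Hence $\colonideal{\OO}{\colonideal{\OO}{\aa}}_\pp = \colonideal{\OO_\pp}{\colonideal{\OO_\pp}{\aa_\pp}}$. Two fractional ideals coincide if and only if they coincide after localizing at every maximal ideal. So $\aa$ is divisorial if and only if every $\aa_\pp$ is a divisorial $\OO_\pp$-ideal. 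Conversely, every nonzero $\OO_\pp$-ideal is of the form $\aa_\pp$ with $\aa = \aa_\pp \cap \OO$. Therefore it suffices to prove the following local statement: a one-dimensional Noetherian local domain $R$ with finite normalization is Gorenstein if and only if every nonzero ideal is divisorial, i.e.\ reflexive with respect to $\Hom_R(-,R)$.

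For the local statement I would use the standard characterization (Bass; Matsumura, Thm.~18.1 and Sec.~21) that a one-dimensional Cohen--Macaulay local ring is Gorenstein if and only if it is injective as a module over itself modulo a nonzerodivisor. Equivalently, it is Gorenstein if and only if $R$ serves as a canonical module, i.e.\ $\Hom_R(-,R)$ is a dualizing functor on maximal Cohen--Macaulay modules. Any nonzero fractional ideal of a one-dimensional domain is torsion-free, hence maximal Cohen--Macaulay. So if $R$ is Gorenstein, local duality gives $M \cong \Hom_R(\Hom_R(M,R),R)$ for every such $M$. For a fractional ideal, $\Hom_R(\aa,R)$ is identified with $\colonideal{R}{\aa}$ via multiplication, and the double-dual map is the inclusion $\aa \subseteq \colonideal{R}{\colonideal{R}{\aa}}$. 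It follows that $\aa$ is divisorial.

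For the converse, I would apply divisoriality to the maximal ideal $\pp$ and show that $\dim_{R/\pp} \colonideal{R}{\pp}/R = 1$, which is the Cohen--Macaulay type $1$ criterion for Gorensteinness. First, if $\colonideal{R}{\pp} = R$, then $\colonideal{R}{\colonideal{R}{\pp}} = R \neq \pp$, contradicting divisoriality. So $\colonideal{R}{\pp} \supsetneq R$; this also handles the case where $R$ is a DVR. Next I need to show that the length is at most $1$. By the duality theory of fractional ideals (Bass, ``On the ubiquity of Gorenstein rings'', Sec.~6), the length of $\colonideal{R}{\pp}/R$ equals the type of $R$. If every ideal is divisorial, then $\colonideal{R}{-}$ is an inclusion-reversing involution on fractional ideals. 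An involution of this kind preserves lengths of quotients, so $\ell(\colonideal{R}{\pp}/R) = \ell(R/\pp) = 1$, and $R$ has type $1$, hence is Gorenstein.

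The main obstacle is this length-preservation step: showing that a colon involution satisfies $\ell(\colonideal{R}{\bb}/\colonideal{R}{\aa}) = \ell(\aa/\bb)$ for $\bb \subseteq \aa$. I would prove it by induction on a composition series. For a step $\bb \subset \aa$ of length $1$, the map $\colonideal{R}{\bb} \to \Hom(\aa/\bb, \ldots)$ shows that $\colonideal{R}{\bb}/\colonideal{R}{\aa}$ embeds into $\Hom_R(\aa/\bb, K/R)$, so its length is at most $1$. Since the involution is bijective and order-reversing, this quotient is also nonzero, so its length is exactly $1$. Finiteness of all lengths comes from $\OO$ having finite index in $\OO_K$. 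Given how the paper treats background material, in the writeup I would mostly cite Bass or Jensen–Thorup for these steps.
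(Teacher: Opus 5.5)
Your reduction to the local ring $R=\OO_\pp$ is sound, and so is your forward direction: over a Gorenstein local ring the natural map $\aa \to \colonideal{R}{\colonideal{R}{\aa}}$, which is the inclusion, is an isomorphism for maximal Cohen--Macaulay modules. The gap is in the converse, at exactly the step you flagged. For $\bb \subset \aa$ with $\aa/\bb \cong R/\pp$, you embed $\colonideal{R}{\bb}/\colonideal{R}{\aa}$ into $\mathrm{Hom}_R(R/\pp, K/R)$ and conclude that its length is at most $1$. But $\mathrm{Hom}_R(R/\pp,K/R)$ is the socle of $K/R$, namely $\colonideal{R}{\pp}/R \cong \mathrm{Ext}^1_R(R/\pp,R)$. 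Its length is the Cohen--Macaulay type of $R$, which is precisely what you are trying to show equals $1$. In the only instance you need ($\bb=\pp$, $\aa=R$), your embedding is the identity map of $\colonideal{R}{\pp}/R$, so the argument is circular. Nothing forces this socle to be small in general. For the non-Gorenstein order $\OO=\Z[2\alpha,2\alpha^2]$ of \Cref{eg:nonimp} (with $\alpha=\sqrt[3]{2}$) and $\pp=2\OO_K$, one has $\colonideal{\OO}{\pp}=\OO_K$, and $\OO_K/\OO$ has length $2$.

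The repair is to take the upper bound from the involution as well, not just the nonvanishing. Since every fractional ideal of $R$ is divisorial, $\cc \mapsto \colonideal{R}{\cc}$ is an inclusion-reversing bijection from the set of fractional ideals to itself. Moreover, any $R$-module lying between two fractional ideals is itself a fractional ideal. Suppose $R \subsetneq \cc \subsetneq \colonideal{R}{\pp}$. Then $\pp \subsetneq \colonideal{R}{\cc} \subsetneq R$, with strictness coming from injectivity, which contradicts the maximality of $\pp$. Together with $\colonideal{R}{\pp}\neq R$, this shows $\colonideal{R}{\pp}/R$ is simple, so $R$ has type $1$ and is Gorenstein. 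The same cover-preservation argument proves your general length lemma, if you want it.

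With this fix your proof is a valid alternative to the paper's, which simply cites \cite[Thm.~6.3]{Bass}; yours is more self-contained. You still rely on standard facts for the two directions: reflexivity of maximal Cohen--Macaulay modules over Gorenstein rings, and Gorenstein $\iff$ type $1$.
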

\begin{proof}
This is a special case of \cite[Thm.~6.3]{Bass}.
\end{proof}

\begin{prop}\label{prop:monogenic}
If an order $\OO$ is monogenic (meaning that $\OO = \Z[\alpha]$ for some $\alpha$), then $\OO$ is Gorenstein.
\end{prop}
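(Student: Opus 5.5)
To prove that a monogenic order $\OO = \Z[\alpha]$ is Gorenstein, I will show that its dual module (inverse different) is an invertible fractional $\OO$-ideal. The key input is Euler's lemma: the dual is generated by a single element.

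Let $f \in \Z[x]$ be the monic minimal polynomial of $\alpha$, of degree $n = [K:\Q]$, and consider the trace dual
\begin{equation}
\OO^\vee = \{x \in K : \operatorname{Tr}_{K/\Q}(x\OO) \subseteq \Z\}.
\end{equation}

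\textbf{Step 1: Euler's formula.} Write $f(x)/(x-\alpha) = \sum_{i=0}^{n-1} b_i x^i$ with $b_i \in \OO$. Euler's formula states that $\operatorname{Tr}(\alpha^j b_i / f'(\alpha)) = \delta_{ij}$ for $0 \le i,j \le n-1$. I would prove it by the classical partial-fraction identity
\begin{equation}
\sum_{k} \frac{f(x)}{x-\alpha_k}\frac{\alpha_k^j}{f'(\alpha_k)} = x^j,
\end{equation}
summed over the conjugates $\alpha_k$ of $\alpha$. This is Lagrange interpolation of $x^j$ at the $n$ distinct roots, valid for $j<n$. Comparing coefficients of $x^i$ gives the formula.

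\textbf{Step 2: the dual is principal.} Step 1 says that $\{b_i/f'(\alpha)\}$ is the dual basis to $\{\alpha^j\}$, so
\begin{equation}
\OO^\vee = \sum_i \Z\, \frac{b_i}{f'(\alpha)}.
\end{equation}
The $b_i$ satisfy the recursion $b_{n-1} = 1$ and $b_{i-1} = \alpha b_i + a_i$, where the $a_i \in \Z$ are the coefficients of $f$. By this triangular recursion, $\Z\langle b_0,\dots,b_{n-1}\rangle = \Z[\alpha] = \OO$. Hence $\OO^\vee = f'(\alpha)^{-1}\OO$, which is principal and therefore invertible.

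\textbf{Step 3: from invertible dual to Gorenstein.} Here I would use the standard characterization, due to Bass or Jacobinski (see \cite{Bass}): an order is Gorenstein if and only if $\OO^\vee$ is an invertible $\OO$-ideal. Alternatively, via \Cref{prop:gorenstein}, it suffices to show every nonzero ideal $\aa$ is divisorial.

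Trace duality gives $(\aa^\vee)^\vee = \aa$ for every full lattice $\aa$, where $\aa^\vee = \colonideal{\OO^\vee}{\aa}$. The reason is that the trace pairing is perfect, and $\aa^\vee$ equals the set of $x$ with $\operatorname{Tr}(x\aa) \subseteq \Z$, because $x\aa \subseteq \OO^\vee$ exactly when $\operatorname{Tr}(x\aa\OO) \subseteq \Z$.

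Now use $\OO^\vee = \delta^{-1}\OO$ with $\delta = f'(\alpha)$. Then
\begin{equation}
\colonideal{\OO}{\aa} = \delta\,\colonideal{\OO^\vee}{\aa} = \delta\,\aa^\vee.
\end{equation}
Applying this twice,
\begin{equation}
\colonideal{\OO}{\colonideal{\OO}{\aa}} = \colonideal{\OO}{\delta\aa^\vee} = \delta^{-1}\colonideal{\OO}{\aa^\vee} = \delta^{-1}\delta(\aa^\vee)^\vee = \aa.
\end{equation}
So every nonzero ideal is divisorial, and \Cref{prop:gorenstein} finishes the proof.

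The main obstacle is Step 1, specifically getting the indexing of the $b_i$ and the interpolation identity correct. The remaining steps are formal manipulations of colon ideals.
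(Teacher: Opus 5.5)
Your proof is correct. The paper gives no argument of its own here; it only cites \cite[Prop.~3.6]{JensenThorup}. Yours is self-contained apart from Bass's characterization (\Cref{prop:gorenstein}), which the paper also relies on.

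Your steps check out. The recursion $b_{n-1}=1$, $b_{i-1}=\alpha b_i+a_i$ is right. Lagrange interpolation at the $n$ distinct conjugates, which are distinct because $f$ is separable, gives the dual-basis relation with exactly your indexing, so $\OO^\vee=f'(\alpha)^{-1}\OO$. Then $\colonideal{\OO}{\aa}=f'(\alpha)\colonideal{\OO^\vee}{\aa}$ together with lattice duality shows that every nonzero fractional ideal is divisorial.

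This route gives you an explicit generator of the codifferent. More usefully, it proves directly that every nonzero fractional $\Z[\alpha]$-ideal is divisorial. That is precisely the fact the paper extracts from \Cref{prop:monogenic} and \Cref{prop:gorenstein} in the proof of \Cref{lem:divisorial}, so for that application your argument avoids the Gorenstein notion altogether.

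The cost is that you reach the paper's actual definition of Gorenstein (finite injective dimension of the localizations) only through Bass's theorem. If you wanted to avoid that, the standard alternative is to note that $\Z[\alpha]\cong\Z[x]/(f)$ is the quotient of the regular ring $\Z[x]$ by a single nonzerodivisor. Its localizations are therefore hypersurfaces, hence complete intersections, hence Gorenstein.

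One small wording point: the identity $(\aa^\vee)^\vee=\aa$ with $\aa^\vee=\colonideal{\OO^\vee}{\aa}$ holds for fractional $\OO$-ideals, not for arbitrary full lattices. In general $\colonideal{\OO^\vee}{\aa}$ is the trace dual of $\OO\aa$. Your own justification already uses $\aa\OO=\aa$, so the proof is unaffected.
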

\begin{proof}
This is a special case of \cite[Prop.~3.6]{JensenThorup}.
\end{proof}

We define a \emph{Bass order} $\OO$ to be one for which every superorder $\OO'$ with $\OO \subseteq \OO' \subseteq \OO_K$ is Gorenstein.

\begin{prop}\label{prop:bass}
Let $\OO$ be an order. The following are equivalent:
\begin{enumerate}
\item[(1)] $\OO$ is Bass.
\item[(2)] Every nonzero $\OO$-ideal is ord-invertible.
\item[(3)] Every $\OO$-ideal is generated by two elements.
\end{enumerate}
\end{prop}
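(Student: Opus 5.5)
The paper's style is to outsource classical results, and I would do the same here. The equivalence of (1) and (3) is Bass's theorem \cite{Bass}; the connection with (2) goes through the local theory of Gorenstein orders. I would reduce everything to the local case by localizing at each nonzero prime $\pp$ of $\OO$. Each of the three conditions is local. For (3), a finitely generated module over a semilocal domain is generated by $n$ elements once each localization is, by a Nakayama/CRT argument, and $\OO$ can be replaced by suitable semilocal rings. For (1), a superorder $\OO'$ is Gorenstein if and only if each $\OO'_\pp$ is Gorenstein. For (2), invertibility over $\ord(\aa)$ can be checked locally by \Cref{lem:invertible}(3), using $\ord(\aa)_\pp = \ord(\aa_\pp)$.

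For (1) $\Rightarrow$ (2), let $\aa$ be a nonzero $\OO$-ideal and $\OO' = \ord(\aa)$. Then $\aa$ is a fractional $\OO'$-ideal with $\ord_{\OO'}(\aa) = \OO'$. Since $\OO'$ is Gorenstein, \Cref{prop:gorenstein} makes $\aa$ divisorial as an $\OO'$-ideal. The key point is then the standard fact that over a Gorenstein order, a fractional ideal whose multiplier ring is the whole order is invertible. Locally this comes from the duality $\aa \mapsto \colonideal{\OO'}{\aa}$: the ideal $\aa\colonideal{\OO'}{\aa}$ satisfies $\colonideal{\OO'}{\aa\colonideal{\OO'}{\aa}} = \colonideal{\colonideal{\OO'}{\aa}}{\colonideal{\OO'}{\aa}} = \ord(\aa) = \OO'$, using divisoriality. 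Applying divisoriality once more gives $\aa\colonideal{\OO'}{\aa} = \OO'$, which is invertibility by \Cref{lem:invertible}(2).

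For (2) $\Rightarrow$ (3), suppose $\aa$ is an invertible $\OO'$-ideal. Then $\aa$ is locally principal over $\OO'$, so $\aa_\pp = x\OO'_\pp$. Generation of $\aa$ by two elements as an $\OO$-module then reduces to generation of $\OO'_\pp$ by two elements as an $\OO_\pp$-module. This would be shown by induction on the length of $\OO'/\OO$, or by citing Bass's result that ideals in a Bass order are $2$-generated.

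For (3) $\Rightarrow$ (1), every superorder $\OO'$ is itself a fractional $\OO$-ideal, so it is $2$-generated as an $\OO$-module, hence as an $\OO'$-module. Bass shows that a local order all of whose ideals are $2$-generated is Gorenstein: the dual module $\colonideal{\OO'}{\ff}$ has the right length. I would cite \cite[Thm.~6.2]{Bass} and related results rather than reprove this.

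The main obstacle is the implication (2) $\Rightarrow$ (3), together with the converse direction linking $2$-generation to the Gorenstein property. Both rest on Bass's module-theoretic analysis. I would first try to cite \cite{Bass} directly, in the same way \Cref{prop:gorenstein} is cited. Only if a self-contained route is needed would I use the local duality argument together with the bound $\dim_{\OO/\pp}\aa/\pp\aa \le 2$ coming from $\OO'_\pp/\pp\OO'_\pp$.
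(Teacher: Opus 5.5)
Your proof of (1) $\Rightarrow$ (2) is correct. It is the standard duality argument: divisoriality over the Gorenstein order $\OO'=\ord(\aa)$ gives $\ord(\colonideal{\OO'}{\aa})=\OO'$, hence $\colonideal{\OO'}{\aa\colonideal{\OO'}{\aa}}=\OO'$, hence $\aa\colonideal{\OO'}{\aa}=\OO'$. (The paper does not prove the proposition itself; it cites \cite[Prop.~5.2]{JensenThorup} for the whole equivalence.)

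The genuine gap is (2) $\Rightarrow$ (3). In your cycle (1) $\Rightarrow$ (2) $\Rightarrow$ (3) $\Rightarrow$ (1), this is the only link that must extract something from hypothesis (2), and nothing in the proposal does. Reducing to $2$-generation of $\OO'_\pp$ over $\OO_\pp$ is fine. However, ``induction on the length of $\OO'/\OO$'' gives no mechanism by which ord-invertibility bounds the number of generators. The fallback ``ideals in a Bass order are $2$-generated'' is circular: it presupposes (1), which you never derive from (2). So as written you have only (1) $\Rightarrow$ (2) and (3) $\Rightarrow$ (1).

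There is also a slip in (3) $\Rightarrow$ (1). What you need is not that $\OO'$ is $2$-generated, which says nothing over $\OO'$. You need that every fractional $\OO'$-ideal, being a fractional $\OO$-ideal, is $2$-generated. Then each localized maximal ideal of $\OO'$ is $2$-generated, which forces $\OO'$ to be Gorenstein.

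The missing idea is how (2) forces the Gorenstein property. Use the trace dual $\OO^\vee=\{x\in K:\operatorname{Tr}(x\OO)\subseteq\Z\}$. One always has $\ord(\OO^\vee)=\OO$: dualize $y\OO^\vee\subseteq\OO^\vee$ to get $y\OO\subseteq\OO$. Since $\OO$ is Gorenstein if and only if $\OO^\vee$ is $\OO$-invertible, (2) makes $\OO$ Gorenstein. Fractional $\OO'$-ideals are fractional $\OO$-ideals with the same multiplier ring, so (2) is inherited by every superorder, giving (2) $\Rightarrow$ (1). Then (1) $\Rightarrow$ (3) is the part you can legitimately attribute to Bass.

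For a self-contained (2) $\Rightarrow$ (3), argue locally:
\begin{enumerate}
\item Applying (2) to $\pp$ gives $\pp_\pp=x\,\ord(\pp)_\pp$ for some $x$, since invertible ideals of a semilocal domain are principal. Hence $\pp_\pp^2=x\pp_\pp$.
\item Thus $\pp_\pp/x\OO_\pp$ lies in the socle of $\OO_\pp/x\OO_\pp$. That socle is one-dimensional because $\OO_\pp$ is Gorenstein, so $\ell(\OO_\pp/x\OO_\pp)\le 2$.
\item Every nonzero ideal then satisfies $\dim\aa_\pp/\pp\aa_\pp\le\ell(\aa_\pp/x\aa_\pp)=\ell(\OO_\pp/x\OO_\pp)\le 2$, and local $2$-generation globalizes.
\end{enumerate}
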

\begin{proof}
See \cite[Prop.~5.2]{JensenThorup}.
\end{proof}

\begin{cor}\label{cor:quadratic}
If $\OO$ is an order in a quadratic number field, then $\OO$ is Bass.
\end{cor}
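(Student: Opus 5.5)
The plan is to verify condition (1) of \Cref{prop:bass} directly from the definition of a Bass order: every superorder $\OO'$ with $\OO \subseteq \OO' \subseteq \OO_K$ must be Gorenstein. Condition (3) (two generators) could be checked by hand in the quadratic case, but going through Gorensteinness lets us use \Cref{prop:monogenic} and avoid any lattice computations.

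\textbf{Step 1: structure of superorders.} Let $K$ be quadratic with $\OO_K = \Z[\omega]$. This integral basis exists: take $\omega = \sqrt{d}$ or $\omega = (1+\sqrt{d})/2$ according to $d \bmod 4$. Let $\OO'$ be any order of $K$ with $\OO \subseteq \OO' \subseteq \OO_K$. We claim $\OO' = \Z + f\OO_K = \Z[f\omega]$ for some positive integer $f$. Indeed, $\OO' \cap \Q = \Z$, since an order is a finitely generated $\Z$-module, so its rational elements are integral over $\Z$. Hence the image of $\OO'$ in $\OO_K/\Z \cong \Z$ (via the $\omega$-coordinate) is a nonzero subgroup $f\Z$, and it is nonzero because $\OO'$ has rank $2$. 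Since $1 \in \OO'$, it follows that $\OO' = \Z + \Z f\omega = \Z[f\omega]$. The last equality holds because $(f\omega)^2 \in \Z + \Z f\omega$: from $\omega^2 = t\omega - n$ we get $(f\omega)^2 = ft\cdot(f\omega) - f^2 n$.

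\textbf{Step 2: Gorenstein.} Each such $\OO' = \Z[f\omega]$ is monogenic, so it is Gorenstein by \Cref{prop:monogenic}. Since $\OO'$ was an arbitrary superorder of $\OO$, the order $\OO$ is Bass by definition. The equivalence in \Cref{prop:bass} then also gives that every nonzero $\OO$-ideal is ord-invertible and $2$-generated.

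The only step needing care is the first one, specifically the claim that every quadratic order is monogenic. This relies on the rank-$2$ lattice argument: every order contains $\Z$, and $\OO_K/\Z$ is infinite cyclic. This is where quadraticity is essential, since in higher degree orders need not be monogenic. Beyond that, the argument is routine.
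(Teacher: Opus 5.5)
Your proof is correct, but it takes a different route from the paper. The paper checks condition (3) of \Cref{prop:bass}. Any ideal $\aa$ of a quadratic order is a $\Z$-lattice of rank at most $2$, say $\aa = \alpha_1\Z + \alpha_2\Z$, so it is generated as an $\OO$-ideal by $\alpha_1,\alpha_2$. The implication (3)$\Rightarrow$(1) then gives that $\OO$ is Bass. That is a one-line argument, but all of its substance sits in the cited direction (3)$\Rightarrow$(1) of \Cref{prop:bass}, which is a nontrivial theorem.

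You instead work directly from the definition of a Bass order. You classify the intermediate orders as $\Z + f\OO_K = \Z[f\omega]$, note that they are all monogenic, and apply \Cref{prop:monogenic}. Your Step 1 is sound. The $\omega$-coordinate map $\OO_K \to \Z$ has kernel $\Z \subseteq \OO'$, so $\OO' = \Z + \Z f\omega$. The identity $(f\omega)^2 = ft\,(f\omega) - f^2 n$ shows this lattice is the ring $\Z[f\omega]$. The case $\OO' = \OO$ is correctly included among the superorders.

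Your route bypasses \Cref{prop:bass} entirely. It rests only on the fact that monogenic orders are Gorenstein, which is comparatively elementary: the trace dual of $\Z[\alpha]$ is the principal ideal $g'(\alpha)^{-1}\Z[\alpha]$, where $g$ is the minimal polynomial of $\alpha$. The price is the short lattice classification of quadratic orders, and two-generation of ideals becomes a consequence rather than the input. Both arguments use quadraticity in the same way, namely rank $2$ over $\Z$. The paper applies it to ideals; you apply it to intermediate orders.
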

\begin{proof}
Let $\aa$ be any ideal of $\OO$. Then, as a $\Z$-module,
\begin{equation}
\aa = \alpha_1\Z + \alpha_2\Z
\end{equation} 
for some $\alpha_1, \alpha_2 \in \OO$. In particular, $\aa$ is generated as an ideal by $\{\alpha_1, \alpha_2\}$. So $\OO$ satisfies condition (3) of \Cref{prop:bass}, and thus $\OO$ is Bass.
\end{proof}

\begin{eg}[Order properties; non-implications in \Cref{lem:divisorial}]\label{eg:nonimp}
This example is a continuation of \cite[Ex.~2.5]{KL-cft}.
Let $\alpha = \sqrt[3]{2}$, let $K = \Q(\alpha)$, and consider the orders
\begin{align}
\OO_1 &= \Z[2\alpha] = \Z + 2\alpha\Z + 4\alpha^2\Z; \\
\OO &= \Z[2\alpha,2\alpha^2] = \Z + 2\alpha\Z + 2\alpha^2\Z; \\
\OO' &= \Z[\alpha^2] = \Z + 2\alpha\Z + \alpha^2\Z; \\
\OO_K &= \Z[\alpha] = \Z + \alpha\Z + \alpha^2\Z.
\end{align}
Consider also the $\OO$-ideal
\begin{equation}
\qq = 2\Z + 2\alpha\Z + 4\alpha^2\Z.
\end{equation}
\begin{enumerate}
\item The order $\OO'$ is Bass. It is monogenic, therefore Gorenstein (by \Cref{prop:monogenic}), and the only superorder is $\OO_K$.
\item The order $\OO$ is not Gorenstein. This can be seen by verifying that $\qq$ is not $\OO$-divisorial; indeed,
\begin{equation}
\colonideal{\OO}{\colonideal{\OO}{\qq}} = 2\Z + 2\alpha\Z + 2\alpha^2\Z \neq \qq.
\end{equation}
\item The order $\OO_1$ is Gorenstein but not Bass. It is monogenic, therefore Gorenstein. However, is has a non-Gorenstein superorder, namely $\OO$.
\item The order $\OO'$ itself is (trivially) ord-invertible (thus ord-divisorial), but it is not $\OO$-divisorial. Indeed,
\begin{equation}
\colonideal{\OO}{\colonideal{\OO}{\OO'}} = \Z + \alpha\Z + \alpha^2\Z = \OO_K \neq \OO'.
\end{equation}
\item The ideal $\qq$ is not ord-divisorial and thus is not ord-invertible. Indeed, $\ord(\qq) = \colonideal{\qq}{\qq} = \OO$, and as we have seen in (2), $\qq$ is not $\OO$-divisorial.
\item Nontheless, the ideal $\qq$ is $\OO_1$-divisorial. This must be true because $\OO_1$ is Gorenstein, and it can also be verified by calculation.
\end{enumerate}
\end{eg}

\subsection{Coprimality lemmas} 

Noetherian integral domains, specifically  nonmaximal orders of number fields, do not have unique factorization of ideals and do not have greatest common divisors or least common multiples. They do have a notion  of coprimality for pairs of fractional ideals.

\begin{defn}\label{defn:coprime2}
Let $\DD$ be a Noetherian integral domain and $\cc_1, \cc_2$ be fractional $\DD$-ideals. The ideals $\cc_1$ and $\cc_2$ are \emph{coprime} (or $\cc_1$ is \emph{coprime to} $\cc_2$) if they can be written as $\cc_i = \aa_i\bb_i^{-1}$ with $\aa_i$, $\bb_i$ integral ideals satisfying $\aa_1+\aa_2 = \aa_1+\bb_2 = \bb_1+\aa_2 = \bb_1+\bb_2 = \DD$.
\end{defn}

We prove two useful lemmas about coprimality.
\begin{lem}\label{lem:coprimesupport}
Let $\DD$ be a Noetherian integral domain of dimension $1$, and let $\cc_1$ and $\cc_2$ be fractional $\DD$-ideals. Then $\cc_1$ and $\cc_2$ are coprime if and only if $\supp(\cc_1) \cap \supp(\cc_2) = \emptyset$.
\end{lem}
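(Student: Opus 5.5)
The plan is to reduce both directions to the local statement that, for integral ideals in a Noetherian domain of dimension $1$, $\aa+\bb=\DD$ holds if and only if no maximal ideal contains both $\aa$ and $\bb$. For nonzero integral $\aa$, the maximal ideals containing $\aa$ are exactly the primes in $\supp(\aa)$. Indeed, $\aa_\pp \neq \DD_\pp$ if and only if $\aa \subseteq \pp$, because $\aa_\pp = \DD_\pp$ exactly when $\aa$ contains an element outside $\pp$. Also, every nonzero prime is maximal since the dimension is $1$. Hence, for nonzero integral ideals,
\begin{equation}
\aa+\bb=\DD \iff \supp(\aa)\cap\supp(\bb)=\emptyset,
\end{equation}
because a proper sum lies in some maximal ideal $\pp$, which then contains both. (We tacitly take the fractional ideals nonzero, as supports are only defined for those.)

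\emph{Forward direction.} Suppose $\cc_i=\aa_i\bb_i^{-1}$ with the four coprimality conditions of \Cref{defn:coprime2}. For each prime $\pp$ we have $(\cc_i)_\pp = (\aa_i)_\pp (\bb_i^{-1})_\pp$, using that localization commutes with products and with inverses of invertible ideals (a finitely generated colon). Therefore $\supp(\cc_i)\subseteq \supp(\aa_i)\cup\supp(\bb_i)$. Any $\pp\in\supp(\cc_1)\cap\supp(\cc_2)$ would then lie in the support of one of $\aa_1,\bb_1$ and one of $\aa_2,\bb_2$. By the displayed equivalence this contradicts one of the four sum conditions.

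\emph{Reverse direction.} Assume the supports are disjoint. Following the proof of \Cref{thm:primary}, I would take the decomposition $\cc_i=\prod_j \qq_{ij}$ over $\supp(\cc_i)$, where each factor is a fractional ideal supported at a single prime. The aim is to write each $\cc_i=\aa_i\bb_i^{-1}$ with $\aa_i,\bb_i$ integral, $\bb_i$ invertible, and $\supp(\aa_i)\cup\supp(\bb_i)\subseteq \supp(\cc_i)$. The displayed equivalence then delivers all four sum conditions at once.

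The main obstacle is producing such an $\aa_i$ and $\bb_i$ whose supports do not spill outside $\supp(\cc_i)$. A generic denominator $d\in\DD$ clearing $\cc_i$ may vanish at extraneous primes. To handle this, for each $\pp\in\supp(\cc_i)$ choose the denominator locally and supported only at $\pp$. Concretely, $\DD_\pp$ is a local Noetherian domain of dimension $1$, so $\pp^N$ is contained in a principal ideal for large $N$, and some element $t$ has $\rad(t\DD_\pp)$ maximal. Then the $\pp$-primary component $\bb_\pp := t\DD_\pp\cap\DD$ is an invertible integral ideal, invertible because it is locally principal, so \Cref{lem:invertible} applies. Its support is $\{\pp\}$, and for $k$ large, $\bb_\pp^k\qq_{ij}$ is integral and $\pp$-primary. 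Taking $\bb_i=\prod_\pp \bb_\pp^{k}$ and $\aa_i=\bb_i\cc_i$ then gives integral ideals supported in $\supp(\cc_i)$, completing the proof.
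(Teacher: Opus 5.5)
Your proof is correct and follows the same route as the paper. The integral case is handled via $\aa+\bb=\DD \iff \supp(\aa)\cap\supp(\bb)=\emptyset$. The fractional case uses a representation $\cc_i=\aa_i\bb_i^{-1}$ with $\bb_i$ invertible and $\supp(\aa_i)\cup\supp(\bb_i)\subseteq\supp(\cc_i)$.

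The only difference is how that representation is produced. The paper starts from an arbitrary one and cancels common primary components at primes outside $\supp(\cc_i)$. You instead build the denominator directly from the invertible $\pp$-primary ideals $t\DD_\pp\cap\DD$ and set $\aa_i=\bb_i\cc_i$. Your separate forward direction, which uses only $\supp(\cc_i)\subseteq\supp(\aa_i)\cup\supp(\bb_i)$ for the given representation, also makes explicit a step the paper leaves implicit.
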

\begin{proof}
In the case when $\cc_1$ and $\cc_2$ are integral ideals, they are coprime if and only if $\cc_1+\cc_2=\DD$, or equivalently, $(\cc_1)_\pp+(\cc_2)_\pp=\DD_\pp$ for every prime ideal $\pp$. Because $\DD_\pp$ is a local ring, every integral ideal other than $\DD_\pp$ is contained in the maximal ideal, so the latter condition is equivalent to the condition that $(\cc_1)_\pp=\DD_\pp$ or $(\cc_2)_\pp=\DD_\pp$ for every $\pp$, that is, $\supp(\cc_1) \cap \supp(\cc_2) = \emptyset$.

For fractional ideals, we write $\cc_i = \aa_i\bb_i^{-1}$ for integral $\aa_i$ and invertible integral $\bb_i$. Using primary decomposition to cancel common primary factors as in the proof of \Cref{thm:primary} for fractional ideals, we may ensure that $\supp(\aa_i) \cup \supp(\bb_i) = \supp(\cc_i)$. The lemma then follows from the integral ideal case applied to the pairs $(\aa_1,\aa_2)$, $(\aa_1,\bb_2)$, $(\bb_1,\aa_2)$, and $(\bb_1,\bb_2)$.
\end{proof}

\begin{lem}\label{lem:coprimefactors}
Let $\DD$ be a Noetherian integral domain of dimension $1$.
Let $\mm$ be a fractional $\DD$-ideal and $\aa, \bb, \cc$ fractional $\DD$-ideals such that $\aa\bb=\cc$. If any two of $\aa,\bb,\cc$ are coprime to $\mm$, then so is the third.
\end{lem}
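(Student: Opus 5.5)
The plan is to reduce everything to a local statement about supports, using \Cref{lem:coprimesupport}. By that lemma, a fractional ideal is coprime to $\mm$ if and only if its support is disjoint from $\supp(\mm)$. So it suffices to prove the following claim: for each nonzero prime $\pp$, if $\pp$ lies outside the supports of two of $\aa,\bb,\cc$, then it lies outside the support of the third. Applying this to every $\pp \in \supp(\mm)$ gives the lemma. Throughout I take $\aa,\bb,\cc$ nonzero, as is implicit in the definition of support.

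The key input is that localization at a prime $\pp$ commutes with products of fractional ideals:
\[
(\aa\bb)_\pp = \DD_\pp\aa\bb = (\DD_\pp\aa)(\DD_\pp\bb) = \aa_\pp\bb_\pp ,
\]
so that $\cc_\pp = \aa_\pp\bb_\pp$. The three cases are then as follows.
\begin{itemize}
\item If $\aa_\pp = \bb_\pp = \DD_\pp$, then $\cc_\pp = \DD_\pp\DD_\pp = \DD_\pp$.
\item If $\aa_\pp = \cc_\pp = \DD_\pp$, then $\DD_\pp = \DD_\pp\bb_\pp = \bb_\pp$, since $\bb_\pp$ is a $\DD_\pp$-module.
\item The case $\bb_\pp = \cc_\pp = \DD_\pp$ is symmetric to the previous one and gives $\aa_\pp = \DD_\pp$.
\end{itemize}
Hence $\supp(\cc) \subseteq \supp(\aa)\cup\supp(\bb)$, $\supp(\bb) \subseteq \supp(\aa)\cup\supp(\cc)$, and $\supp(\aa) \subseteq \supp(\bb)\cup\supp(\cc)$. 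For instance, if $\aa$ and $\cc$ are coprime to $\mm$, then $\supp(\bb) \cap \supp(\mm) \subseteq (\supp(\aa)\cup\supp(\cc)) \cap \supp(\mm) = \emptyset$, and the other two cases follow in the same way.

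There is no serious obstacle. The only point needing care is the identity $(\aa\bb)_\pp = \aa_\pp\bb_\pp$ as submodules of $K$, which is immediate from the definition $M_\pp = \DD_\pp M$. All the real content, namely the translation of coprimality into disjointness of supports via primary decomposition, is already contained in \Cref{lem:coprimesupport}.
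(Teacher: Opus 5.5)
Your proposal is correct and follows the paper's proof essentially step for step: you localize at each prime to get $\cc_\pp = \aa_\pp\bb_\pp$, conclude that each support is contained in the union of the other two, and then intersect with $\supp(\mm)$ using \Cref{lem:coprimesupport}. Your explicit check of the case $\aa_\pp = \cc_\pp = \DD_\pp$ (via $\DD_\pp\bb_\pp = \bb_\pp$) just spells out a step the paper states without comment.
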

\begin{proof}
For every prime $\DD$-ideal $\pp$, we have $\aa_\pp\bb_\pp = \cc_\pp$. If any two of $\aa_\pp$, $\bb_\pp$, and $\cc_\pp$ is equal to $\DD_\pp$, then so is the third. Thus,
\begin{align}
\supp(\aa) \subseteq \supp(\bb) \cup \supp(\cc); &&
\supp(\bb) \subseteq \supp(\aa) \cup \supp(\cc); &&
\supp(\cc) \subseteq \supp(\aa) \cup \supp(\bb).
\end{align}
By \Cref{lem:coprimesupport}, the result follows by considering the intersections with $\supp(\mm)$.
\end{proof}

\section{Primitive ray class monoids}\label{sec:primitive}

We introduce and study monoids containing ray classes of non-invertible ideals coprime to a modulus $\mm$.
  
We first define some (infinite) monoids.
\begin{defn}
If $\mm$ is an ideal of an order $\OO$, set
\begin{align}
\rJ(\OO) &:= \{\mbox{nonzero fractional ideals of } \OO\}, \mbox{ and} \\
\rJ_{\mm}(\OO) &:= \{\mbox{nonzero fractional ideals of } \OO \mbox{ coprime to } \mm\}.
\end{align}
\end{defn}

We now define the \emph{\rcmpc} in terms of these monoids by a quotient operation.

\begin{defn}
For a level datum $(\OO; \mm, \rS)$, the \emph{\rcmpc} is defined to be
\begin{equation}
\Clmpc_{\mm,\rS}(\OO) = \frac{\rJ_{\mm}(\OO)}{\rP_{\mm,\rS}(\OO)}.
\end{equation}
Here $\rP_{\mm,\rS}(\OO)$ is the set of principal ideals  $\alpha \OO$,
with $\alpha \equiv 1 \, (\bmod \mm)$ satisfying the sign conditions in $\rS$ and relatively prime to $\mm$.
We also set $\Clmpc(\OO)=\Clmpc_{\OO,\emptyset}(\OO)$.
\end{defn}

\subsection{Conditions for \rcmpc\ $\Clmpc_{\mm, \rS}(\OO)$ to be a group}\label{subsec:31}

We always have $\Cl_{\mm,\rS}(\OO) = \Clmpa_{\mm, \rS}(\OO) \subseteq  \Clmpc_{\mm,\rS}(\OO) $.
The next result characterizes cases where the sharp primitive ray class monoid coincides with the ray class group.

\begin{prop}\label{prop:clmeqcl}
The following properties are equivalent:
\begin{enumerate}
\item
$\Clmpc_{\mm,\rS}(\OO)$ is a group.
\item
$\Clmpc_{\mm,\rS}(\OO) = \Cl_{\mm,\rS}(\OO)$.
\item
$\rJ_{\mm}(\OO)$ is a group.
\item
$\rJ_{\mm}(\OO) = \rJ_{\mm}^\ast(\OO)$.
\item
$\supp(\mm) \subseteq \supp(\ff(\OO))$.
\item
$\mm \subseteq \rad(\ff(\OO))$.
\end{enumerate}
\end{prop}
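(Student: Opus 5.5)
I will establish the chain (1)$\Leftrightarrow$(2)$\Leftrightarrow$(3)$\Leftrightarrow$(4) by elementary monoid arguments, and then connect (4)$\Leftrightarrow$(5)$\Leftrightarrow$(6) using \Cref{lem:invertible} and \Cref{lem:coprimesupport}.

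\textbf{The first four conditions.} The implications (4)$\Rightarrow$(3) and (2)$\Rightarrow$(1) are immediate, since $\rJ^\ast_\mm(\OO)$ and $\Cl_{\mm,\rS}(\OO)$ are groups.

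For (1)$\Rightarrow$(4), suppose $\Clmpc_{\mm,\rS}(\OO)$ is a group and let $\aa \in \rJ_\mm(\OO)$. Then there is $\bb \in \rJ_\mm(\OO)$ with $\aa\bb \in \rP_{\mm,\rS}(\OO)$, say $\aa\bb = \alpha\OO$. Hence $\aa\cdot(\alpha^{-1}\bb) = \OO$, so $\aa$ is invertible. Since $\aa$ is coprime to $\mm$, this gives $\aa \in \rJ^\ast_\mm(\OO)$. The same argument with $\aa\bb = \OO$ gives (3)$\Rightarrow$(4).

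For (4)$\Rightarrow$(2), the two quotients are then literally the same quotient of the same monoid. So (1)--(4) are equivalent.

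\textbf{Rephrasing (4) through supports.} For (4)$\Leftrightarrow$(5), note that (4) says every nonzero fractional ideal coprime to $\mm$ is invertible. By \Cref{lem:invertible}(3), invertibility is local: $\aa$ is invertible iff each $\aa_\pp$ is principal. By \Cref{lem:coprimesupport}, coprimality to $\mm$ means $\supp(\aa)\cap\supp(\mm)=\emptyset$.

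\textbf{(5)$\Rightarrow$(4).} Let $\aa$ be coprime to $\mm$ and take any prime $\pp$.
\begin{itemize}
\item If $\pp\notin\supp(\aa)$, then $\aa_\pp=\OO_\pp$ is principal.
\item If $\pp\in\supp(\aa)$, then $\pp\notin\supp(\mm)$, so by (5) $\pp$ need not lie in $\supp(\ff(\OO))$.
\end{itemize}
The second case needs more care: (5) says $\supp(\mm)\subseteq\supp(\ff)$, not the reverse. So I must recheck which direction is intended. The claim "every ideal coprime to $\mm$ is invertible" requires that every prime outside $\supp(\mm)$ be regular, that is, $\supp(\ff(\OO))\subseteq\supp(\mm)$.

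This direction is confirmed by (6). The inclusion $\mm\subseteq\rad(\ff)$ means every prime containing $\ff$ contains $\mm$, i.e.\ $\supp(\ff)\subseteq\supp(\mm)$, using $\supp(\cc)=\{\pp\supseteq\cc\}$ for integral $\cc$. I will therefore prove the equivalences with condition (5) read as the inclusion consistent with (6), namely singular primes $\subseteq\supp(\mm)$, and flag that inclusion direction.

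With that reading, (5)$\Rightarrow$(4) goes as follows. For $\pp\in\supp(\aa)$ we have $\pp\notin\supp(\mm)\supseteq\supp(\ff)$. Hence $\pp\not\supseteq\ff(\OO)$, so $\OO_\pp$ is a DVR by \Cref{lem:invertible}(4),(5), and $\aa_\pp$ is principal.

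\textbf{(4)$\Rightarrow$(5), the main obstacle.} I expect this step to be the main difficulty. Suppose some singular prime $\pp\supseteq\ff(\OO)$ is not in $\supp(\mm)$. Then $\pp$ itself is a fractional ideal with $\supp(\pp)=\{\pp\}$ disjoint from $\supp(\mm)$, so $\pp$ is coprime to $\mm$ by \Cref{lem:coprimesupport}. But $\pp$ is non-invertible by \Cref{lem:invertible}(5), contradicting (4).

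\textbf{(5)$\Leftrightarrow$(6).} This is the standard fact that for integral ideals $\rad(\cc)=\bigcap_{\pp\supseteq\cc}\pp$. For (5)$\Rightarrow$(6), $\mm$ is contained in every prime containing $\ff$, hence in their intersection $\rad(\ff)$. For (6)$\Rightarrow$(5), any prime $\pp\supseteq\ff$ contains $\rad(\ff)\supseteq\mm$. This uses that $\supp$ of an integral ideal is exactly the set of nonzero primes containing it, which holds by localization since $\cc_\pp\ne\OO_\pp$ iff $\cc\subseteq\pp$.
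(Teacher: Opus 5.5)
Your proof is correct and follows the paper's argument essentially step for step. For (1)--(4) it uses that a class is invertible exactly when its representatives are, since $\rP_{\mm,\rS}(\OO)$ consists of invertible principal ideals; for (4)$\Rightarrow$(5) a singular prime outside $\supp(\mm)$ serves as the non-invertible witness, and the converse uses local principality at regular primes. You are also right to flag condition (5): as printed the inclusion is reversed (it holds trivially for $\mm=\OO$, where (4) fails for any non-maximal order), and the paper's own proof in fact works with a prime $\pp\in\supp(\ff(\OO))\setminus\supp(\mm)$, i.e.\ with the reading $\supp(\ff(\OO))\subseteq\supp(\mm)$ that you adopt and that matches (6).
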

\begin{proof}
The implications that (1) $\shortiff$ (2) $\shortiff$ (3) $\shortiff$ (4) are immediate consequences of the definitions together with the observations that principal ideals are invertible and $\rP_{\mm,\rS}(\OO)$ is a group, so an ideal $\aa \in \rJ_\mm(\OO)$ is invertible if and only if its class $[\aa] \in \Clmpa_{\mm,\rS}(\OO)$ is invertible.

We now prove that (4) $\shortiff$ (5). 
If $\supp(\mm) \not\subseteq \supp(\ff(\OO))$, then for any prime ideal $\pp \in \supp(\ff(\OO)) \setminus \supp(\mm)$, it follows from \Cref{lem:invertible}($(1) \shortiff(5)$)  that $\pp \in \rJ_\mm(\OO) \setminus \rJ_\mm^\ast(\OO)$, so $\rJ_\mm(\OO) \neq \rJ_\mm^\ast(\OO)$.
Conversely, if $\supp(\mm) \subseteq \supp(\ff(\OO))$, then by \Cref{lem:invertible}($(1) \shortiff (5)$) every prime ideal in $\rJ_\mm(\OO)$ is invertible, so indeed by the local characterization of invertibility shown in \Cref{lem:invertible}($(1) \shortiff (3)$), every ideal in $\rJ_\mm(\OO)$ is invertible, and $\rJ_\mm(\OO)=\rJ_\mm^\ast(\OO)$.

The implication (5) $\shortiff$ (6) follows from primary decomposition (\Cref{thm:primary}).
\end{proof}

In the special case $\mm = \OO$, it follows from \Cref{prop:clmeqcl} that $\Clmpa(\OO) = \Cl(\OO)$ if and only if $\OO = \OO_K$.

\subsection{The \rcmpb\ $\Clmpb_{\mm,\rS}(\OO)$ }\label{subsec:32}

The structure of $\Clmpc_{\mm,\rS}(\OO)$, or even $\Clmpc(\OO)$, is difficult to describe in general. It may contain elements arising from ideals that are not invertible in any order, and such elements lead to pathological behavior. (See Zanardo and Zannier \cite{ZanardoZannier}.)
However, the submonoid of ord-invertible ideal classes is well-behaved and admits a 
decomposition into groups attached to intermediate orders.

\begin{defn}
Let
\begin{align}
\PI(\OO) &:= \{\mbox{ord-invertible fractional ideals of } \OO\}, \mbox{ and} \\
\PI_{\mm}(\OO) &:= \{\mbox{ord-invertible fractional ideals of } \OO \mbox{ coprime to } \mm\}.
\end{align}
\end{defn}

\begin{defn}
The \emph{\rcmpb} is defined to be
\begin{equation}
\Clmpb_{\mm,\rS}(\OO) = \frac{\PI_{\mm}(\OO)}{\rP_{\mm,\rS}(\OO)}.
\end{equation}
Set $\Clmpb(\OO)=\Clmpb_{\OO,\emptyset}(\OO)$.
\end{defn}

\begin{prop}\label{prop:basseq1}
If $\OO$ is Bass, then $\Clmpb_{\mm,\rS}(\OO) = \Clmpc_{\mm,\rS}(\OO)$. If $\OO$ is not Bass, then $\Clmpb(\OO) \subsetneq \Clmpc(\OO)$.
\end{prop}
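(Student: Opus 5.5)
Both statements reduce to \Cref{prop:bass}, which characterizes Bass orders as those in which every nonzero $\OO$-ideal is ord-invertible. First I extend that condition from integral ideals to fractional ones; then I compare the monoids $\PI_\mm(\OO) \subseteq \rJ_\mm(\OO)$ before passing to the quotient by $\rP_{\mm,\rS}(\OO)$.

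\emph{Bass case.} Let $\cc \in \rJ_\mm(\OO)$ be a nonzero fractional ideal. Choose a nonzero $d \in \Z$ with $d\cc \subseteq \OO$. Then $\aa = d\cc$ is a nonzero integral $\OO$-ideal, so it is ord-invertible by \Cref{prop:bass}((1)$\Rightarrow$(2)). Scaling does not change the multiplier ring, since $\ord(d\cc)=\colonideal{d\cc}{d\cc}=\colonideal{\cc}{\cc}=\ord(\cc)$. If $\aa\bb=\ord(\aa)$, then $\cc\,(d\bb)=\ord(\cc)$, so $\cc$ is ord-invertible. Hence $\PI_\mm(\OO)=\rJ_\mm(\OO)$. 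Both monoids are quotiented by the same group $\rP_{\mm,\rS}(\OO)$, so $\Clmpb_{\mm,\rS}(\OO)=\Clmpc_{\mm,\rS}(\OO)$.

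\emph{Non-Bass case.} Take $\mm=\OO$ and $\rS=\emptyset$. By \Cref{prop:bass}((2)$\Rightarrow$(1)) there is a nonzero integral $\OO$-ideal $\aa$ that is not ord-invertible. Every ideal is coprime to $\OO$, so $\aa \in \rJ_\OO(\OO)$. I must check that its class $[\aa]$ does not lie in $\Clmpb(\OO)$. The classes are orbits under multiplication by principal ideals $\alpha\OO$ with $\alpha\in K^\times$. If $[\aa]=[\bb]$ with $\bb$ ord-invertible, then $\aa=\alpha\bb$ for some $\alpha \in K^\times$. As in the Bass case, $\ord(\alpha\bb)=\ord(\bb)$ and ord-invertibility is preserved under scaling by $\alpha$, so $\aa$ would be ord-invertible, a contradiction. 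Thus ord-invertibility is a class invariant, and $\Clmpb(\OO)\subsetneq\Clmpc(\OO)$.

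The only step needing care is this scaling invariance of ord-invertibility, and it is a one-line colon-ideal computation. There is no real obstacle. The rest is a direct application of \Cref{prop:bass}.
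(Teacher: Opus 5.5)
Your proposal is correct and takes the same route as the paper, which simply cites the equivalence of (1) and (2) in \Cref{prop:bass}. You also fill in routine details the paper leaves implicit. First, you pass from integral to fractional ideals by scaling. Second, you observe that ord-invertibility is unchanged by multiplication by $\alpha \in K^\times$, so it is a property of classes and no class in $\Clmpb(\OO)$ contains a non-ord-invertible ideal.
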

\begin{proof}
Follows directly from the equivalence of (1) and (2) in \Cref{prop:bass}.
\end{proof}

To express the niceness of the structure of the \rcmpb, we introduce some terminology. The following definition is paraphrased from \cite{Halter-Koch}; see also \cite{Clifford}.
\begin{defn}\label{defn:clifford}
Let $M$ be a commutative semigroup, and let $\Id(M) = \{e \in M : e^2=e\}$. For each $e \in \Id(M)$, let $M_e = \{x \in M : xe=x \mbox{ and } xy=e \mbox{ for some } y \in M\}$. (It is straightforward to check that each $M_e$ is a group and that the $M_e$ are disjoint.) Then, $M$ is called a \emph{Clifford semigroup} (also called a completely regular inverse semigroup) if
\begin{equation}\label{eq:clifforddecomp}
M = \bigsqcup_{e \in \Id(M)} M_e.
\end{equation}
We call \eqref{eq:clifforddecomp} the \emph{Clifford decomposition} of $M$ into groups.
If in addition $M$ has a (global) identity element, it is called a \emph{Clifford monoid}.
\end{defn}

\begin{prop}\label{prop:picliff}
The \rcmpb\ $\Clmpb_{\mm,\rS}(\OO)$ has the structure of a Clifford monoid. Precisely,
\begin{equation}\label{eq:piu1}
\Clmpb_{\mm,\rS}(\OO) \isom \bigsqcup_{\substack{\OO \subseteq \OO' \subseteq \OO_K \\ \ff_{\OO'}(\OO) + \mm = \OO}} \Cl_{\mm\OO',\rS}(\OO'),
\end{equation}
where multiplication of $[\aa] \in \Cl_{\mm\OO',\rS}(\OO')$ and $[\bb] \in \Cl_{\mm\OO'',\rS}(\OO'')$ is given by
$[\aa][\bb] = [\aa\bb\OO'''] = [\aa\OO'''][\bb\OO''']$ in $\Cl_{\mm\OO''',\rS}(\OO''')$, with $\OO''' = \OO'\OO''$. 
\end{prop}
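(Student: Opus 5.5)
The plan is to stratify $\PI_\mm(\OO)$ by multiplier ring and match each stratum with a ray class group of that ring. For an ord-invertible $\aa \in \PI_\mm(\OO)$, set $\OO' = \ord(\aa)$. By \Cref{lem:divisorial} ((2) $\iff$ (2')), $\aa$ is an invertible fractional $\OO'$-ideal. The first step is to show that $\ff_{\OO'}(\OO)+\mm=\OO$, and that $\aa$ is coprime to $\mm\OO'$ as an $\OO'$-ideal.

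For the condition $\ff_{\OO'}(\OO)+\mm=\OO$, I would argue locally. If $\pp \in \supp(\mm)$, then $\aa_\pp = \OO_\pp$ because $\aa$ is coprime to $\mm$. Since taking multiplier rings commutes with localization, $\OO'_\pp = \ord(\aa_\pp) = \OO_\pp$. Hence $(\ff_{\OO'}(\OO))_\pp = \OO_\pp$, and by \Cref{lem:coprimesupport} the ideals $\ff_{\OO'}(\OO)$ and $\mm$ are coprime. Under this condition $\OO$ and $\OO'$ agree locally at every prime dividing $\mm$. So contraction and extension give a bijection between primes of $\OO'$ above $\supp(\mm)$ and $\supp_\OO(\mm)$, and $\OO/\mm \cong \OO'/\mm\OO'$. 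It follows that $\aa$ is coprime to $\mm\OO'$ in $\OO'$. Conversely, an invertible $\OO'$-ideal coprime to $\mm\OO'$ is an $\OO$-ideal with multiplier ring $\OO'$, is coprime to $\mm$, and is ord-invertible.

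This yields a decomposition of sets
\begin{equation*}
\PI_\mm(\OO)=\bigsqcup_{\OO'}\rJ^\ast_{\mm\OO'}(\OO'),
\end{equation*}
where $\OO'$ runs over the intermediate orders with $\ff_{\OO'}(\OO)+\mm=\OO$.

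The second step is to compare the quotients. The group $\rP_{\mm,\rS}(\OO)$ preserves multiplier rings, since $\ord(\alpha\aa)=\ord(\aa)$, so it acts stratum by stratum. The map $\alpha\OO\mapsto\alpha\OO'$ sends $\rP_{\mm,\rS}(\OO)$ into $\rP_{\mm\OO',\rS}(\OO')$. I need its image to have the same orbits on $\rJ^\ast_{\mm\OO'}(\OO')$ as all of $\rP_{\mm\OO',\rS}(\OO')$. Take $\beta\in\OO'$ with $\beta\equiv1 \pmod{\mm\OO'}$ and the required signs. I want to adjust $\beta$ by a unit of $\OO'$ congruent to $1 \pmod{\mm\OO'}$ so that the result lies in $\OO$ and is $\equiv 1 \pmod{\mm}$. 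More cleanly, it is enough that every class in $\Cl_{\mm\OO',\rS}(\OO')$ of the form $[\beta\OO']$ is represented by some $\alpha\OO'$ with $\alpha\in\OO$. For this I would use the isomorphism $\OO/\mm\cong\OO'/\mm\OO'$ and the fact that $\beta$ only needs adjusting at primes of the conductor, which are away from $\mm$. By weak approximation and CRT I would pick $\alpha\in\OO$ with $\alpha\equiv\beta$ modulo $\mm\ff_{\OO'}(\OO)$, the appropriate signs, and $\alpha\beta^{-1}$ a local unit of $\OO'$ everywhere, hence a global unit of $\OO'$ with $\alpha\OO'=\beta\OO'$. I expect this orbit-equality step to be the main obstacle. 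Alternatively one could invoke the surjectivity of $\Cl_{\mm,\rS}(\OO)\to\Cl_{\mm\OO',\rS}(\OO')$ from \cite{KL-cft} if it is available there. Granting this, each stratum quotient is exactly $\Cl_{\mm\OO',\rS}(\OO')$, which gives the bijection \eqref{eq:piu1}.

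Finally I would check the multiplication and the Clifford structure. For $\aa,\bb$ with multiplier rings $\OO',\OO''$, invertibility gives $\aa\bb=\aa\OO'''\cdot\bb\OO'''$ with $\OO'''=\OO'\OO''$. Locally both ideals are principal, so $\ord(\aa\bb)=\OO'\OO''$, and the product formula follows. The idempotents are the classes $[\OO']$. The group attached to $[\OO']$ is exactly $\Cl_{\mm\OO',\rS}(\OO')$, and the identity is $[\OO]$. So the decomposition in \eqref{eq:piu1} is the Clifford decomposition of \Cref{defn:clifford}.
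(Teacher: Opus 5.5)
Your stratification of $\PI_\mm(\OO)$ by multiplier ring is sound, as are the local proof that $\ff_{\OO'}(\OO)+\mm=\OO$ and your treatment of products and idempotents. They reach the same decomposition as the paper, which gets there by first computing the idempotents $[\OO']$ and then the groups $M_{\OO'}$.

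The gap is in the step you flag yourself: showing that $\{\alpha\OO' : \alpha\OO\in\rP_{\mm,\rS}(\OO)\}$ is all of $\rP_{\mm\OO',\rS}(\OO')$. The route you propose tries to prove something false. You look for $\alpha\in\OO$ with $\alpha\OO'=\beta\OO'$, but a principal $\OO'$-ideal need not have a generator in $\OO$. For example, take $K=\Q(i)$, $\OO=\Z[3i]$, $\OO'=\Z[i]$, $\mm=\OO$ (so the conductor condition holds trivially), $\rS=\emptyset$ and $\beta=1+i$. The generators $\pm(1+i),\pm(1-i)$ of $\beta\OO'$ all lie outside $\Z[3i]$. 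Independently of this, CRT and weak approximation only constrain $\alpha$ at finitely many places. They cannot force $\alpha\beta^{-1}$ to be a unit at every prime of $\OO'$. Your fallback, surjectivity of $\Cl_{\mm,\rS}(\OO)\to\Cl_{\mm\OO',\rS}(\OO')$, is a statement about $\rJ^\ast$ modulo $\rP$. It does not give the equality of subgroups of principal ideals that you need.

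The missing observation is that no adjustment is needed. Generators of ideals in $\rP$ range over $K^\times$, and the congruence condition only involves the primes in $\supp(\mm)$. You have already shown that $\OO'_\pp=\OO_\pp$ for every $\pp\in\supp(\mm)$, with a unique prime of $\OO'$ above $\pp$. So $\mm\OO'$ and $\mm$ agree at those primes, and $\beta\equiv 1\Mod{\mm\OO'}$ over $\OO'$ is the same condition as $\beta\equiv 1\Mod{\mm}$ over $\OO$.

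Concretely, write $\beta=a/b$ with $a,b\in\OO'$ coprime to $\mm\OO'$ and $a\equiv b\Mod{\mm\OO'}$. Pick $c\in\ff_{\OO'}(\OO)$ with $c\equiv 1\Mod{\mm}$. Then $ca,cb\in\OO$ are coprime to $\mm$, and $ca-cb\in c\,\mm\OO'\subseteq\mm$. The sign conditions are literally the same, so $\beta\OO\in\rP_{\mm,\rS}(\OO)$ and $\beta\OO'=(\beta\OO)\OO'$. Hence $\alpha\OO\mapsto\alpha\OO'$ maps $\rP_{\mm,\rS}(\OO)$ onto $\rP_{\mm\OO',\rS}(\OO')$. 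Since $(\alpha\OO)\aa=(\alpha\OO')\aa$ for every $\aa$ in the $\OO'$-stratum, the two orbit relations coincide.

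This is the identification the paper makes, in one line, when it writes $M_{\OO'}$ as $\rJ^\ast_{\mm\OO'}(\OO')/\rP_{\mm\OO',\rS}(\OO')$. With this replacement your argument goes through.
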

\begin{proof}
Let $M = \Clmpb_{\mm,\rS}(\OO)$.
Suppose $[\ee] \in \Id(M)$, using the notation from \Cref{defn:clifford}.
Since $\ee$ is ord-invertible, there is some order $\OO'$ with $\OO \subseteq \OO' \subseteq \OO_K$ such that $\ee$ is an invertible $\OO'$-ideal, and there is some fractional $\OO'$-ideal $\dd$ such that $\ee\dd=\OO'$. Thus, $\ee = \ee\OO' = \ee^2\dd = \ee\dd = \OO'$. It follows that $\OO' \in \PI_{\mm}(\OO)$, so $\OO'$ is coprime to $\mm$ over $\OO$. The latter condition means that $\OO' = \aa\bb^{-1}$ for integral $\OO$-ideals $\aa, \bb$ satisfying $\aa+\mm = \bb+\mm = \OO$, so $\bb\OO' = \aa \subseteq \OO$, so $\bb \subseteq \ff_{\OO'}(\OO)$ by the definition of the conductor, and thus $\ff_{\OO'}(\OO) + \mm = \OO$. Therefore,
\begin{equation}
\Id(M) = \{\mbox{orders } \OO' \subseteq K : \OO \subseteq \OO' \subseteq \OO_K\}.
\end{equation}
Moreover, if $\OO'+\mm = \OO$, then
\begin{align}
M_{\OO'} 
&= \{[\aa] \in M : \aa\OO'=\aa \mbox{ and } \aa\bb=\OO' \mbox{ for some } \bb\in M\} \\
&= \{[\aa] \in M : \aa \mbox{ is an invertible $\OO'$-ideal}\} \\
&= \frac{\rJ_{\mm}(\OO) \cap \rJ^\ast(\OO')}{\rP_{\mm,\rS}(\OO)\cdot[\OO']}.
\end{align}
An invertible fractional $\OO'$-ideal is coprime to $\mm$ as an $\OO$-ideal if and only if it is coprime to $\mm\OO'$ as an $\OO'$-ideal, so the numerator $\rJ_{\mm}(\OO) \cap \rJ^\ast(\OO') = \rJ^\ast_{\mm\OO'}(\OO')$. Furthermore, multiplication by the class $[\OO']$ sends two classes of principal ideal $\alpha\OO,\beta\OO \in \rP_{\mm,\rS}(\OO)$ to the same class in $\rJ^\ast_{\mm\OO'}(\OO')$ if and only if $\alpha \equiv_{\OO'} \beta \Mod{\mm\OO'}$ 
and $\alpha,\beta$ have the same signs at the real embeddings in $\rS$. Thus,
\begin{equation}
M_{\OO'}  = \frac{\rJ^\ast_{\mm\OO'}(\OO')}{\rP_{\mm\OO',\rS}(\OO')} = \Cl_{\mm\OO',\rS}(\OO').
\end{equation}
On the other hand, if $\ff_{\OO'}(\OO) + \mm \neq \OO$, then no fractional $\OO'$-ideal is coprime to $\mm$.
By the definition of the \rcmpb, every element $[\cc]$ is in some $M_{\OO'}$, so \eqref{eq:piu1} holds.
\end{proof}

The \textit{order product} $\OO'\OO'' = \left\{\sum_i a_ib_i : a_i \in \OO', b_i \in \OO''\right\}$,
the product of $\OO'$ and $\OO''$ as $\OO$-ideals (for any $\OO \subseteq \OO' \cap \OO''$) used in \Cref{prop:picliff},
is the smallest order containing $\OO'$ and $\OO''$. (This may be seen because $\OO'\OO''$ contains $1$, is closed under ring operations, and is contained in $\OO_K$, so is an order, and any order containing $\OO'$ and $\OO''$ contains all elements of the form $\sum_i a_ib_i$ with $a_i \in \OO'$ and $b_i \in \OO''$.) In many familiar cases, such as quadratic orders, the order product $\OO'\OO''$ is equal to the lattice sum $\OO'+\OO''$. It is clear that $\OO'+\OO'' \subseteq \OO'\OO''$. However, the following example shows that $\OO'+\OO''$ is not always an order and may be strictly smaller than $\OO'\OO''$.

\begin{eg}
Let $K=\Q(2^{1/7})$, and consider the following two orders in $K$.
\begin{align}
\OO' &= \Z[2^{2/7}] = \Z + 2^{2/7}\Z + 2^{4/7}\Z + 2^{6/7}\Z + 2^{8/7}\Z + 2^{10/7}\Z + 2^{12/7}\Z; \\
\OO'' &= \Z[2^{3/7}] = \Z + 2^{9/7}\Z + 2^{18/7}\Z + 2^{6/7}\Z + 2^{15/7}\Z + 2^{3/7}\Z + 2^{12/7}\Z.
\end{align}
Then, the lattice sum of these orders is
\begin{equation}
\OO' + \OO'' = \Z + 2^{2/7}\Z + 2^{4/7}\Z + 2^{6/7}\Z + 2^{8/7}\Z + 2^{3/7}\Z + 2^{12/7}\Z.
\end{equation}
In particular, $2^{2/7}, 2^{3/7} \in \OO' + \OO''$, but their product $2^{5/7} \nin \OO'+\OO''$. So $\OO'+\OO''$ is not an order. The product of orders is equal to
\begin{equation}
\OO'\OO'' = \Z + 2^{2/7}\Z + 2^{4/7}\Z + 2^{6/7}\Z + 2^{8/7}\Z + 2^{3/7}\Z + 2^{5/7}\Z.
\end{equation}
\end{eg}

\subsection{Expressing primitive ray class monoids by equivalence relations}\label{subsec:33}

We will now rewrite each of the three types of primitive ray class monoids as a set of ideals modulo an equivalence relation. The primary purpose of doing so is the introduce the equivalence relations that will be used to define imprimitive ray class monoids in \Cref{sec:imprimitive}. Secondarily, we can deduce that every equivalence class can be represented not only by fractional ideals but also by integral ideals.

\begin{prop}\label{prop:grouptilde2}
The \rcg, \rcmpb, and \rcmpc\ may be written as follows:
\begin{align}
\Cl_{\mm,\rS}(\OO) &\isom \frac{\rJ_\mm^\ast(\OO)}{\sim_{\mm,\rS}} \isom \frac{\rI_\mm^\ast(\OO)}{\sim_{\mm,\rS}}, \\
\Clmpb_{\mm,\rS}(\OO) &\isom \frac{\PI_\mm(\OO)}{\sim_{\mm,\rS}} \isom \frac{\rI_\mm^\natural(\OO)}{\sim_{\mm,\rS}}, \mbox{ and} \\
\Clmpc_{\mm,\rS}(\OO) &\isom \frac{\rJ_\mm(\OO)}{\sim_{\mm,\rS}} \isom \frac{\rI_\mm(\OO)}{\sim_{\mm,\rS}},
\end{align}
where
\begin{equation}
\aa \sim_{\mm,\rS} \bb \iff \begin{array}{c}\exists \cc \in \rJ_{\mm}(\OO) \mbox{ and } \alpha, \beta \in \OO[S_\mm^{-1}] \mbox{ such that } \aa = \alpha\cc, \bb = \beta\cc, \\ \alpha \equiv \beta \Mod{\mm}, \sgn(\rho(\alpha)) = \sgn(\rho(\beta)) \mbox{ for all } \rho \in \rS.\end{array}
\end{equation}
\end{prop}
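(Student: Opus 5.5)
We need two things for each of the three monoids $\rJ^\ast_\mm(\OO)$, $\PI_\mm(\OO)$, $\rJ_\mm(\OO)$: first, that the relation $\sim_{\mm,\rS}$ restricted to it coincides with the orbit relation of $\rP_{\mm,\rS}(\OO)$; second, that every class contains an integral representative. Here $\rI^\ast_\mm(\OO)$, $\rI^\natural_\mm(\OO)$, $\rI_\mm(\OO)$ denote the integral ideals in the respective fractional monoids.

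\textbf{Step 1: the orbit relation implies $\sim_{\mm,\rS}$.} Suppose $\bb = \gamma\aa$ with $\gamma\OO \in \rP_{\mm,\rS}(\OO)$. Take $\cc = \aa$, $\alpha = 1$, $\beta = \gamma$. Since $\gamma \equiv 1 \Mod{\mm}$ and $\gamma$ is coprime to $\mm$, it lies in $\OO[S_\mm^{-1}]$: write $\gamma = x/s$ with $x, s \in S_\mm$. The sign condition holds because $\rho(\gamma) > 0$ for $\rho \in \rS$.

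\textbf{Step 2: $\sim_{\mm,\rS}$ implies the orbit relation.} Suppose $\aa = \alpha\cc$ and $\bb = \beta\cc$ with $\alpha \equiv \beta \Mod{\mm}$ and matching signs. Because $\aa, \cc \in \rJ_\mm(\OO)$, \Cref{lem:coprimefactors} shows $\alpha\OO$ is coprime to $\mm$, so $\alpha$ is a unit in $\OO[S_\mm^{-1}]$; likewise $\beta$. Set $\gamma = \beta/\alpha$. It lies in $\OO[S_\mm^{-1}]$, and
\[
\gamma - 1 = (\beta-\alpha)/\alpha \in \mm\OO[S_\mm^{-1}],
\]
so $\gamma \equiv 1 \Mod{\mm}$ in the sense used for $\rP_{\mm,\rS}$. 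It is totally positive at $\rS$ and coprime to $\mm$. Then $\bb = \gamma\aa$, and the relations agree.

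There is one subtlety here: one must confirm that ``$\gamma \equiv 1 \Mod{\mm}$'' for $\gamma \in K$ is defined as $\gamma - 1 \in \mm\OO[S_\mm^{-1}]$, consistent with \cite{KL-cft}. I would state this convention explicitly. Restricting to $\rJ^\ast_\mm$ or $\PI_\mm$ is harmless, since multiplication by a principal ideal preserves invertibility and ord-invertibility; this also makes $\sim_{\mm,\rS}$ an equivalence relation on each set. Taking $\cc = \aa$ in the hypothesis of $\sim$ is allowed since $\aa \in \rJ_\mm(\OO)$.

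\textbf{Step 3: integral representatives.} Given $\aa$ in one of the fractional monoids, we need $\gamma\OO \in \rP_{\mm,\rS}(\OO)$ with $\gamma\aa \subseteq \OO$. I expect this to be the main obstacle. First pick a nonzero integer $d$ with $d\aa \subseteq \OO$. Write $d = d_1 d_2$, where $d_2$ is supported on primes over $\supp(\mm)$ and $d_1$ is coprime to $\mm$. Because $\aa$ is coprime to $\mm$, we have $\aa_\pp = \OO_\pp$ for $\pp \in \supp(\mm)$, so only the denominator at primes outside $\supp(\mm)$ matters.

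Choose an element $\gamma \in \OO$ with the following properties:
\begin{itemize}
\item $\gamma \in d_1\OO_\pp$ at each of the finitely many primes $\pp \notin \supp(\mm)$ where $\aa_\pp \not\subseteq \OO_\pp$ (enough to clear those denominators);
\item $\gamma \equiv 1 \Mod{\mm}$.
\end{itemize}
Both conditions can be met by the Chinese remainder theorem in $\OO$, since the relevant ideals $\mm$ and powers of those primes are comaximal. Then $\gamma\aa \subseteq \OO$ locally everywhere, and hence globally.

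For the sign conditions, replace $\gamma$ by $\gamma + N$, where $N$ is a large positive integer in $\mm \cap \prod \pp^{k}$. Indeed $\mm \cap \Z \neq 0$ and the required powers contain integers, so such $N$ exists. Adding $N$ preserves both congruence conditions and makes every real embedding positive, even though $\gamma$ is not itself rational.

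Finally, $\gamma \equiv 1 \Mod{\mm}$ implies $\gamma\OO + \mm = \OO$, so $\gamma\OO \in \rP_{\mm,\rS}(\OO)$. The ideal $\gamma\aa$ is integral, coprime to $\mm$, and keeps the invertibility or ord-invertibility of $\aa$. This gives the second isomorphism in each line.
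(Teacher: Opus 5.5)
Your Steps 1 and 2 are essentially the paper's argument. In one direction take $\cc=\aa$ and $(\alpha,\beta)=(1,\gamma)$. In the other, use \Cref{lem:coprimefactors} to make $\alpha,\beta$ units at $\supp(\mm)$, so $\beta/\alpha\equiv 1\pmod{\mm}$. Your extra care about the congruence convention in $\OO[S_\mm^{-1}]$ is fine. The problem is in Step 3.

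There, the parenthetical claim that $\gamma\in d_1\OO_\pp$ is ``enough to clear those denominators'' is false. Splitting $d=d_1d_2$ by rational primes does not separate $\supp(\mm)$ from its complement. A rational prime can lie under primes of $\OO$ both inside and outside $\supp(\mm)$. At such a $\pp\notin\supp(\mm)$, $d_2$ is not a unit, and it may be exactly the part of $d$ that clears the denominator of $\aa_\pp$.

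Concretely, take $\OO=\Z[i]$, $\mm=(2+i)$, $\rS=\emptyset$, and $\aa=(2-i)^{-1}=\tfrac{2+i}{5}\Z[i]$. This $\aa$ is coprime to $\mm$. Every admissible $d$ is divisible by $5$, and the $5$-part goes into $d_2$ because $5$ lies under $(2+i)\in\supp(\mm)$. So $d_1$ is a unit at $\pp=(2-i)$ and your first condition is vacuous. Then $\gamma=1$, or $1+N$ for $N\in\mm\cap\Z$, satisfies all your requirements, yet $\gamma\aa\not\subseteq\OO$.

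The repair is easy. Drop the split and require $\gamma\in d\OO_\pp\cap\OO$, or $\gamma\in\pp^k$ for $k$ large, at each bad prime $\pp\notin\supp(\mm)$. These conditions are comaximal with $\mm$ precisely because $\pp\notin\supp(\mm)$, whatever rational prime lies below $\pp$. So your CRT step and the sign adjustment then go through.

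The paper's route avoids localization entirely. By the definition of coprimality, write $\aa=\aa'\bb^{-1}$ with $\aa'$ integral, $\bb$ integral invertible, and $\bb+\mm=\OO$. Choose $\gamma\in\bb$ with $\gamma\equiv 1\pmod{\mm}$, positive at $\rS$ by adding a large integer from $\bb\cap\mm$. Then $\gamma\aa\subseteq\aa'\subseteq\OO$ and $\gamma\OO\in\rP_{\mm,\rS}(\OO)$.
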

\begin{proof}
We prove the isomorphisms for $\Clmpc_{\mm,\rS}(\OO)$; the proof the others is the same. Consider $\aa, \bb \in \rJ_\mm(\OO)$. 

If $\aa$ is equivalent to $\bb$ in $\Clmpc_{\mm,\rS}(\OO)$, then there is some $\gamma\OO \in \rP_{\mm, \rS}(\OO)$ such that $\aa = \gamma\bb$, $\gamma \equiv 1 \Mod{\mm}$, and $\rho(\gamma)>0$ for all $\rho \in \rS$. It follows that $\aa \sim_{\mm,\rS} \bb$ by taking $\cc = \bb$ and $(\alpha,\beta) = (1,\gamma)$.

Conversely, suppose $\aa \sim_{\mm,\rS} \bb$. Then there are some $\cc \in \rJ_{\mm}(\OO)$ and $\alpha, \beta \in \OO[S_\mm^{-1}]$ such that $\aa = \alpha\cc$, $\bb = \beta\cc$, $\alpha \equiv \beta \Mod{\mm}$, and $\sgn(\rho(\alpha)) = \sgn(\rho(\beta))$ for all $\rho \in \rS$. Since $\aa, \bb, \cc$ are coprime to $\mm$, it follows (from the equations $\aa=\alpha\cc$ and $\bb=\beta\cc$) that $\alpha\OO$ and $\beta\OO$ are also coprime to $\mm$ (by \Cref{lem:coprimefactors}), so $\alpha\beta^{-1} \equiv 1 \Mod{\mm}$. Also, $\aa = (\alpha\beta^{-1})\bb$, and $\rho(\alpha\beta^{-1})$ for all $\rho \in \rS$, so $\aa$ and $\bb$ are equivalent in $\Clmpa_{\mm,\rS}(\OO)$.

Finally, for the isomorphism $\frac{\rJ_\mm(\OO)}{\sim_{\mm,\rS}} \isom \frac{\rI_\mm(\OO)}{\sim_{\mm,\rS}}$, it suffices to show that every equivalence class in $\frac{\rJ_\mm(\OO)}{\sim_{\mm,\rS}}$ contains an integral ideal. Consider $\dd \in \rJ_{\mm}(\OO)$. Represent $\dd = \aa \bb^{-1}$ for some integral ideals $\aa \in \rI(\OO)$ and some $\bb \in \rI^\ast(\OO)$. There exists some nonzero $\beta \in \bb$ such that $\beta \equiv 1 \Mod{\mm}$ and $\rho(\beta)>0$ for $\rho \in \rS$, so $\beta\dd \in \rI_\mm(\OO)$ and $\dd \sim_{\mm,\rS} \beta\dd$.
\end{proof}

\section{Imprimitive ray class monoids}\label{sec:imprimitive}

Removing the coprimality conditions is more delicate than removing the invertibility conditions. The monoid
$\frac{\rJ^\ast(\OO)}{\rP_{\mm,\rS}(\OO)}$ is infinite for $\mm \neq \OO$ and is not the monoid we want.
We can  remove the coprimality conditions using a characterization with equivalence relations analogous
to those for primitive ray class monoids given in Section \ref{subsec:33}.

\begin{defn}
We define the monoid $\overline{\rJ}^\ast_\mm(\OO)$ (resp.~$\PIt_\mm(\OO)$, $\ol\rJ_\mm(\OO)$) 
of fractional ideals \emph{semilocally integral at $\mm$} that are invertible (resp.~ord-inverible, have no invertibility condition).
\begin{align}
\overline{\rJ}^\ast_\mm(\OO) &= \{\aa \in \rJ^\ast(\OO) : \aa\OO[S_\mm^{-1}] \subseteq \OO[S_\mm^{-1}]\}; \\
\PIt_\mm(\OO) &= \{\aa \in \PI(\OO) : \aa\OO[S_\mm^{-1}] \subseteq \OO[S_\mm^{-1}]\}; \\
\ol\rJ_\mm(\OO) &= \{\aa \in \rJ(\OO) : \aa\OO[S_\mm^{-1}] \subseteq \OO[S_\mm^{-1}]\}.
\end{align}
In all three cases, the semilocal integrality condition $\aa\OO[S_\mm^{-1}] \subseteq \OO[S_\mm^{-1}]$ is equivalent to the condition that $\aa\OO_\pp \subseteq \OO_\pp$ for all nonzero prime ideals $\pp \subseteq \mm$.
\end{defn}

Crucially, ideals that are semilocally integral at $\mm$ need not be coprime to $\mm$; using ideals that are not coprime to $\mm$ will allow us to define a ray class monoid encoding ray class data from both $\mm$ and its proper divisors (more accurately, its proper superideals).

\begin{defn}
Consider the equivalence relation $\sim_{\mm,\rS}$ on $\ol\rJ_\mm(\OO)$ defined by
\begin{equation}
\aa \sim_{\mm,\rS} \bb \iff \begin{array}{c}\exists \cc \in \ol\rJ_{\mm}(\OO) \mbox{ and } \alpha, \beta \in \OO[S_\mm^{-1}] \mbox{ such that } \aa = \alpha\cc, \bb = \beta\cc, \\ \alpha \equiv \beta \Mod{\mm}, \sgn(\rho(\alpha)) = \sgn(\rho(\beta)) \mbox{ for all } \rho \in \rS.\end{array}
\end{equation}
The \emph{\rcmia} is
\begin{equation}
\Clmia_{\mm,\rS}(\OO) = \frac{\overline{\rJ}^\ast_\mm(\OO)}{\sim_{\mm,\rS}}.
\end{equation}
The \emph{\rcmib} is
\begin{equation}
\Clmib_{\mm,\rS}(\OO) = \frac{\PIt_\mm(\OO)}{\sim_{\mm,\rS}}.
\end{equation}
The \emph{\rcmic} is
\begin{equation}
\Clmic_{\mm,\rS}(\OO) = \frac{\ol\rJ_\mm(\OO)}{\sim_{\mm,\rS}}.
\end{equation}
\end{defn}

\subsection{Examples}\label{subsec:40}

The following example calculates the imprimitive ray class monoids in a simple example and illustrates that they are not described simply by quotienting by $\rP_{\mm,\rS}(\OO)$.

\begin{eg}
We examine the simple case where $\OO=\Z$, $\mm=p\Z$ for a prime $p$, and $\rS=\{\infty\}$ consists of the unique infinite place. We have
\begin{equation}
\ol\rJ_{p\Z}^\ast(\Z) 
= \PIt_{p\Z}(\Z)
= \ol\rJ_{p\Z}(\Z)
= \left\{\tfrac{a}{b}\Z : a \neq 0, \, p \ndiv b, \, a,b>0\right\}.
\end{equation}
Note that, for such fractions (with positive numerator and denominator), the equivalence $\tfrac{a_1}{b_1} \sim_{p\Z, \{\infty\}} \tfrac{a_2}{b_2}$ holds if and only if $\tfrac{a_1}{b_1} \equiv \tfrac{a_2}{b_2} \Mod{p}$ in $\Z[S_{(p)}] = \Z_{(p)}$. Thus, the imprimitive ray class monoids are all
\begin{equation}
\Clmia_{\mm,\rS}(\OO) =
\Clmib_{\mm,\rS}(\OO) =
\Clmic_{\mm,\rS}(\OO) =
(\Z/p\Z, \cdot),
\end{equation}
where $(\Z/p\Z, \cdot)$ indicates $\Z/p\Z$ with the multiplication operation.
We also have
\begin{equation}
\rP_{p\Z,\{\infty\}}(\Z) = \left\{\tfrac{a}{b}\Z : p \ndiv a, \, p \ndiv b, \mbox{ and } a \equiv b \Mod{p}\right\}.
\end{equation}
The quotient $\frac{\ol\rJ_{p\Z}^\ast(\Z) }{\rP_{p\Z,\{\infty\}}(\Z)}$ is infinite, because the classes of $\frac{1}{1}\Z$, $\frac{p}{1}\Z$, $\frac{p^2}{1}\Z$, $\frac{p^3}{1}\Z$, and so on are all distinct.
\end{eg}

The next example shows that the  monoid $\Clmia_{\mm,\rS}(\OO)$ is not generally a Clifford monoid (and, therefore, neither are the larger $\Clmib_{\mm,\rS}(\OO)$ and $\Clmic_{\mm,\rS}(\OO)$).

\begin{eg}
If $\mm = \pi^2\OO$ for a principal prime ideal $\pi\OO$, then the only idempotents in $M=\Clmia_{\mm,\rS}(\OO)$ are $[\OO]$ and $[\pi^2\OO]$, and $[\pi\OO] \nin M_e$ for any idempotent $e$. Thus, $M$ is not a Clifford monoid.

A simple case of this phenomenon is the case $\OO=\Z$ and $\mm = p^2\Z$, in which case $M \isom \left(\Z/p^2\Z,\cdot\right)$.
\end{eg}

\subsection{Structure of \rcmia\ $\Clmia_{\mm,\rS}(\OO)$ }\label{subsec:41}

We now give an analogue of \cite[Lem.~5.12]{KL-cft} for the \rcmia, showing that this monoid does not change when additional coprimiality conditions are imposed. 

\begin{lem}\label{lem:45}
For a $K$-order $\OO$, any nonzero ideals $\dd \subseteq \mm \subseteq \OO$, and any set $\rS$ of real places of $K$, the inclusion map $\ol\rJ_\dd^\ast(\OO) \inj \ol\rJ_\mm^\ast(\OO)$ induces a natural monoid isomorphism
\begin{equation}
\Clmia_{\mm,\rS}(\OO) = \frac{\ol\rJ_\dd^\ast(\OO)}{\sim_{\mm,\rS}}
\end{equation}
\end{lem}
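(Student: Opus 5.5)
We must show that the map $\ol\rJ_\dd^\ast(\OO)/\!\sim_{\mm,\rS} \to \ol\rJ_\mm^\ast(\OO)/\!\sim_{\mm,\rS}$ induced by inclusion is well defined, surjective and injective. It is a monoid homomorphism because it is induced by inclusion of monoids and the relation is the same on both sides.

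\textbf{Setup and well-definedness.} Since $\dd \subseteq \mm$, every prime containing $\mm$ contains $\dd$. Hence $S_\dd \subseteq S_\mm$ and $\OO[S_\dd^{-1}] \subseteq \OO[S_\mm^{-1}]$, and semilocal integrality at $\dd$ implies semilocal integrality at $\mm$. So $\ol\rJ_\dd^\ast(\OO) \subseteq \ol\rJ_\mm^\ast(\OO)$. We read $\sim_{\mm,\rS}$ on $\ol\rJ_\dd^\ast(\OO)$ as the restriction of the relation on $\ol\rJ_\mm^\ast(\OO)$; then well-definedness is automatic.

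\textbf{Surjectivity.} Given $\aa \in \ol\rJ_\mm^\ast(\OO)$, we want $\lambda \in \OO[S_\mm^{-1}]$ with $\lambda \equiv 1 \Mod{\mm}$ and $\rho(\lambda)>0$ for $\rho\in\rS$, such that $\lambda\aa$ is integral at the finitely many primes $\pp \supseteq \dd$ with $\pp \not\supseteq \mm$. Then $\aa \sim_{\mm,\rS} \lambda\aa$, witnessed by $\cc=\aa$ and $(\alpha,\beta)=(1,\lambda)$.

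To build $\lambda$, first write $\aa=\aa_0\bb^{-1}$ with $\aa_0$ integral and $\bb$ invertible integral. Let $\mathfrak{n}$ be the part of the primary decomposition (\Cref{thm:primary}) of $\dd$ at primes outside $\supp(\mm)$; it is coprime to $\mm$. By CRT, choose $\lambda \in \bb$ such that:
\begin{itemize}
\item $\lambda \equiv 1 \Mod{\mm}$, which is possible because $\bb$ is locally the unit ideal at primes of $\mm$, as $\aa$ is integral and invertible there. We can arrange this using $\bb+\mm$ after replacing $\bb$ by its part supported away from $\supp(\mm)$.
\item $\lambda$ is positive at $\rS$, arranged by adding a large positive multiple of an element of $\bb\cap\mm$.
\end{itemize}
Then $\lambda\aa \subseteq \aa_0 \subseteq \OO$ is integral, so it lies in $\ol\rJ_\dd^\ast(\OO)$.

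\textbf{Injectivity, the main obstacle.} Suppose $\aa,\bb \in \ol\rJ_\dd^\ast(\OO)$ and $\aa \sim_{\mm,\rS}\bb$ via $\cc\in\ol\rJ_\mm^\ast(\OO)$ and $\alpha,\beta\in\OO[S_\mm^{-1}]$. The witness $\cc$ need not be integral at primes of $\dd$ outside $\mm$, and the relation as defined only allows witnesses in the ambient monoid. To handle this, apply the surjectivity construction to $\cc$: pick $\lambda$ with $\lambda\equiv1\Mod{\mm}$, positive at $\rS$, and $\lambda\cc$ integral at the primes of $\mathfrak{n}$. We also want $\lambda$ to be a unit at the primes of $\mm$, which follows from $\lambda\equiv 1$. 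Then
\[
\aa=(\alpha\lambda^{-1})(\lambda\cc), \qquad \bb=(\beta\lambda^{-1})(\lambda\cc).
\]
Since $\lambda^{-1}\in\OO[S_\mm^{-1}]$, we have $\alpha\lambda^{-1}\equiv\beta\lambda^{-1}\Mod{\mm}$, and the signs agree. Moreover $\lambda\cc\in\ol\rJ_\dd^\ast(\OO)$. Thus $\aa\sim\bb$ already via witnesses in $\ol\rJ_\dd^\ast(\OO)$, if one insists on that.

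The delicate point is the CRT step. We must simultaneously make $\lambda$ invertible at the primes of $\mm$, congruent to $1$ modulo $\mm$, and such that $\lambda\cc$ is integral at the non-$\mm$ primes of $\dd$. We will handle it prime-by-prime using $\OO/(\mm\cap\mathfrak{n}') \cong \OO/\mm \times \OO/\mathfrak{n}'$ for a suitable $\mathfrak{n}'$ coprime to $\mm$, which is valid by \Cref{lem:coprimesupport}.
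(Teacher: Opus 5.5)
Your proof is correct, but your surjectivity step takes a different route from the paper's.

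\textbf{The paper's route.} The paper splits $\bb=\bb_1\bb_2$ into an integral part supported on $\supp(\mm)$ and a part coprime to $\mm$. It then invokes the group-level result \cite[Lem.~5.12]{KL-cft} to find $\beta\OO\in\rP_{\mm,\rS}(\OO)$ with $\beta^{-1}\bb_2$ coprime to $\dd$. The representative $\bb_1(\beta^{-1}\bb_2)$ is therefore trivial at the primes of $\dd$ outside $\supp(\mm)$.

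\textbf{Your route.} You clear denominators directly. Because $\aa$ is already integral at $\supp(\mm)$, the denominator $\bb$ can be taken coprime to $\mm$. An element $\lambda\in\bb$ with $\lambda\equiv 1 \Mod{\mm}$ and $\rho(\lambda)>0$ on $\rS$ then makes $\lambda\aa$ integral.

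\textbf{What each approach gives.} Your argument is self-contained, with no appeal to \cite{KL-cft}, and proves slightly more. It shows that every class in $\Clmia_{\mm,\rS}(\OO)$ contains an integral invertible ideal, hence lies in the image of $\ol\rJ_\dd^\ast(\OO)$ for every $\dd$ at once. This makes visible that, with the definition of $\ol\rJ_\dd^\ast(\OO)$ as given, the lemma is essentially the imprimitive analogue of the last step of \Cref{prop:grouptilde2}. The paper's route instead produces representatives coprime to $\dd$ away from $\supp(\mm)$, which is the closer analogue of the group-level lemma.

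\textbf{Your injectivity section.} The paper reads $\sim_{\mm,\rS}$ on $\ol\rJ_\dd^\ast(\OO)$ as the restricted relation, so injectivity is automatic. You additionally show that witnesses can be moved into $\ol\rJ_\dd^\ast(\OO)$, and that argument is fine. The ``delicate CRT step'' you flag at the end is already done by your surjectivity construction applied to $\cc$. That $\cc$ is automatically invertible, since $\cc=\alpha^{-1}\aa$.

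\textbf{Minor fixes.}
\begin{enumerate}
\item For positivity, add a large multiple of a positive rational integer lying in $\bb\cap\mm$. An arbitrary element of $\bb\cap\mm$ may have mixed signs on $\rS$.
\item After replacing $\bb$ by its part away from $\supp(\mm)$, replace $\aa_0$ by $\aa\bb$ accordingly.
\item The auxiliary ideal $\mathfrak{n}$ is never actually needed.
\end{enumerate}
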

\begin{proof}
As in the proof of \cite[Lem.~5.12]{KL-cft}, it suffices to show that the monoid homomorphism
\begin{equation}
\phi : \ol\rJ_\dd^\ast(\OO) \to \frac{\ol\rJ_\mm^\ast(\OO)}{\sim_{\mm,\rS}} = \Clmia_{\mm,\rS}(\OO).
\end{equation}
is surjective. Consider $\bb \in \ol\rJ^\ast(\OO)$. Using primary decomposition, write $\bb = \bb_1\bb_2$, where $\bb_1$ is an integral ideal with the property that $\pp \supseteq \bb_1 \Rightarrow \pp \supseteq \mm$, and $\bb_2$ is a fractional ideal that is coprime to $\mm$. By \cite[Lem.~5.12]{KL-cft}, there exists some $\beta\OO \in \rP_{\mm,\rS}(\OO)$ such that $\beta^{-1}\bb_2 \in \rJ_{\dd}^\ast(\OO)$. It follows that $\beta^{-1}\bb = \bb_1(\beta^{-1}\bb_2) \in \ol\rJ_{\dd}^\ast(\OO)$. We may choose $\beta \equiv 1 \Mod{\mm}$ with $\rho(\beta)>0$ for $\rho \in \rS$; then, the ideal $\aa = \beta^{-1}\bb$ has $\phi(\aa) = \bb$.
\end{proof}

To facilitate describing the structure of the monoid $\Clmia_{\mm,\rS}(\OO)$, we define a notion of an exact sequence of commutative monoids. 
There is no standard definition of such (but see the discussion in \cite{VoightMO}); 
we adopt a definition suited for our application.
As in an exact sequence of abelian groups, we want the fibers of the latter map to be cosets of the image of the former; to guarantee this property, we impose it directly, because it is not sufficient to say that the image of the former map is the kernel of the latter.

\begin{defn}
A sequence
\begin{equation}
\cdots \to A \xrightarrow{\alpha} B \xrightarrow{\beta} C \to \cdots
\end{equation}
of commutative monoids with homomorphisms between them is \emph{exact at $B$} 
if every nonempty preimage of an element $C$ under $\beta$ is a coset of an image of $\alpha$; that is, if for all $c \in C$, either $\beta^{-1}(c) = \emptyset$, or there exists $b \in B$ such that
\begin{equation}
b\alpha(A) = \beta^{-1}(c).
\end{equation}
A sequence that is exact at all objects with an in-arrow and out-arrow is simply called \emph{exact}.
\end{defn}

It should be possible to give analogues of the exact sequence in \cite[Thm.~6.5]{KL-cft} for ray class monoids. For now, we focus on ``resolving'' the map from $\Clmia_{\mm,\rS}(\OO)$ to $\Cl(\OO)$ in order to understand the structure of $\Clmia_{\mm,\rS}(\OO)$.

\begin{prop}\label{prop:exmonoid}
Let $\phi : \Clmia_{\mm,\rS}(\OO) \to \Cl(\OO)$ be the map given by $\phi([\bb]) = [\bb]$. Then, there is an exact sequence of monoids
\begin{equation}
\left(\OO/\mm,\cdot\right) \times \{\pm 1\}^\rS \to \Clmia_{\mm,\rS}(\OO) \xrightarrow{\phi} \Cl(\OO) \to 1.
\end{equation}
\end{prop}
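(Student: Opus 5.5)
We first define the maps and check that they are well-defined homomorphisms. The map $\psi : (\OO/\mm,\cdot) \times \{\pm1\}^\rS \to \Clmia_{\mm,\rS}(\OO)$ sends $(\bar x, \varepsilon)$ to $[\alpha\OO]$, where $\alpha \in \OO$ is nonzero, $\alpha \equiv x \Mod{\mm}$, and $\sgn(\rho(\alpha)) = \varepsilon_\rho$ for all $\rho \in \rS$. Such an $\alpha$ exists by weak approximation together with the Chinese remainder theorem: adding a large multiple of a suitable element of $\mm$ adjusts the signs without changing the residue. Independence of the choice follows directly from the definition of $\sim_{\mm,\rS}$, taking $\cc = \OO$ and comparing $\alpha,\beta$. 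Multiplicativity is clear, since $\alpha\beta$ represents the product. Note that $\alpha\OO$ is principal, hence invertible, and integral, hence lies in $\ol\rJ^\ast_\mm(\OO)$.

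For $\phi$, we show that $\aa \sim_{\mm,\rS} \bb$ implies $[\aa] = [\bb]$ in $\Cl(\OO)$. This holds because $\aa = \alpha\cc$ and $\bb = \beta\cc$ with $\alpha,\beta \in K^\times$, and $\cc$ is invertible because $\aa$ is. Surjectivity of $\phi$ uses \Cref{lem:45} with $\dd = \mm$, or more simply the standard fact that every class in $\Cl(\OO)$ contains an invertible integral ideal coprime to $\mm$ (\cite[Lem.~5.12]{KL-cft}). Such an ideal lies in $\ol\rJ^\ast_\mm(\OO)$.

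Exactness at $\Clmia_{\mm,\rS}(\OO)$ is the main step. Fix $c \in \Cl(\OO)$ and, using \Cref{lem:45}, choose $\bb \in \rJ^\ast_\mm(\OO)$ coprime to $\mm$ with $[\bb] = c$. We claim $\phi^{-1}(c) = [\bb]\,\psi(\cdot)$. The inclusion $\supseteq$ is clear, since $\phi$ kills principal classes. For $\subseteq$, take $[\aa]$ with $\aa \in \ol\rJ^\ast_\mm(\OO)$ and $[\aa] = [\bb]$ in $\Cl(\OO)$, so that $\aa = \gamma\bb$ for some $\gamma \in K^\times$. Since $\bb$ is coprime to $\mm$ and $\aa$ is semilocally integral at $\mm$, localizing at each prime $\pp \supseteq \mm$ gives $\gamma \in \OO_\pp$. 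Hence $\gamma \in \OO[S_\mm^{-1}]$, which is the intersection of these finitely many localizations. Now choose a nonzero $\alpha \in \OO$ with $\alpha \equiv \gamma \Mod{\mm\OO[S_\mm^{-1}]}$, using $\OO/\mm \cong \OO[S_\mm^{-1}]/\mm\OO[S_\mm^{-1}]$, and with $\sgn\rho(\alpha) = \sgn\rho(\gamma)$ for all $\rho \in \rS$. Then $\aa = \gamma\bb \sim_{\mm,\rS} \alpha\bb$, witnessed by $\cc = \bb$, and $[\alpha\bb] = \psi(\bar\alpha,\sgn)\,[\bb]$.

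The obstacle is this last localization argument, specifically two points. First, we must verify $\gamma \in \OO_\pp$: this holds because $\bb_\pp = \OO_\pp$ and $\aa_\pp = \gamma\OO_\pp \subseteq \OO_\pp$. Second, we need the isomorphism $\OO/\mm \cong \OO[S_\mm^{-1}]/\mm\OO[S_\mm^{-1}]$, which holds because elements of $S_\mm$ are units modulo $\mm$. Exactness at $\Cl(\OO)$ is then just surjectivity.
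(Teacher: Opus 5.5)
Your proof is correct and follows essentially the same route as the paper. Surjectivity comes from coprime representatives, and exactness in the middle comes from writing $\aa=\gamma\bb$ with $\bb$ coprime to $\mm$ and observing that semilocal integrality at $\mm$ forces $\gamma\in\OO[S_\mm^{-1}]$; you additionally spell out well-definedness of $\phi$ and the lift of $\gamma$ to an $\alpha\in\OO$ with the same residue and signs, which the paper leaves implicit.

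One small correction: \Cref{lem:45} does not supply a representative coprime to $\mm$, since it concerns the semilocally integral monoids $\ol\rJ^\ast_\dd(\OO)$ and is vacuous for $\dd=\mm$. In both places the reference you need is \cite[Lem.~5.12]{KL-cft} with $\dd=\mm$, which you also cite and which is what the paper uses.
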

\begin{proof}
Exactness at $\Cl(\OO)$ is equivalent to the surjectivity of $\phi$. By \cite[Lem.~5.12]{KL-cft} (taking $\dd=\mm$), every class in $\Cl(\OO)$ is represented by some $\bb \in \rJ_{\mm}^\ast(\OO)$, and $\rJ_{\mm}^\ast(\OO) \subseteq \overline{\rJ}^\ast_{\mm}(\OO)$, so $\phi$ is surjective.

Define the map $\psi : \left(\OO/\mm,\cdot\right) \times \{\pm 1\}^\rS \to \Clmia_{\mm,\rS}(\OO)$ by
\begin{equation}
\psi(\ol{\alpha},\epsilon) = [\alpha\OO] \mbox{ where } \alpha \equiv \ol{\alpha} \Mod{\mm} \mbox{ and } \sgn(\rho(\alpha)) = \epsilon_\rho.
\end{equation}
This map is well-defined because:
\begin{itemize}
\item[(i)] For any pair $(\ol{\alpha},\epsilon)$, the set $(\ol{\alpha} + \mm) \cap \{\alpha \in \OO : \sgn(\rho(\alpha)) = \epsilon_\rho\} \neq \emptyset$.
\item[(ii)] If $\alpha_1, \alpha_2 \in \OO \setminus \{0\}$, $\alpha_1 \equiv \alpha_2 \Mod{\mm}$, and $\sgn(\rho(\alpha_1))=\sgn(\rho(\alpha_2))$, then $[\alpha_1\OO] = [\alpha_2\OO]$ in $\Clmia_{\mm,\rS}(\OO)$.
\end{itemize}

To prove exactness at $\Clmia_{\mm,\rS}(\OO)$, consider a class $\BB \in \Cl(\OO)$. By \cite[Lem.~5.12]{KL-cft}, 
we may write $\BB = [\bb]$ for some $\bb \in \rJ^\ast_\mm(\OO)$.
Clearly $\phi([\alpha\bb]) = \BB$ for any $\alpha \in \OO[S_\mm^{-1}] \setminus \{0\}$, 
so $\{[\alpha\bb] : \alpha \in \OO[S_\mm^{-1}] \setminus \{0\}\} \subseteq \phi^{-1}(\BB)$.
On the other hand, suppose $\aa \in \overline{\rJ}^\ast_{\mm}(\OO)$ such that $\phi([\aa]) = \BB$. 
Then $\aa$ is equivalent to $\bb$ in $\Cl(\OO)$, so $\aa = \alpha\bb$ for some $\alpha \in K^\times$. 
Moreover, $\alpha\OO = \aa\bb^{-1} \in \overline{\rJ}^\ast_{\mm}(\OO)$, that is, $\alpha\OO$ 
is semilocally integral at $\mm$, so $\alpha \in \OO[S_\mm^{-1}] \setminus \{0\}$. 
Therefore, $\{[\alpha\bb] : \alpha \in \OO[S_\mm^{-1}] \setminus \{0\}\} = \phi^{-1}(\BB)$. 
The left-hand set is the same as $[\bb]\phi\!\left(\left(\OO/\mm,\cdot\right) \times \{\pm 1\}^\rS\right)$, 
so we have proven that the sequence is exact at $\Clmia_{\mm,\rS}(\OO)$.
\end{proof}

The exact sequence in \Cref{prop:exmonoid} is related to the ray class group by the following commutative diagram,
where in both rows the image of the first map consists of the classes of principal ideals.
\begin{equation}
\begin{tikzcd}[column sep=small] 
& \left(\OO/\mm\right)^\times \times \{\pm 1\}^\rS \arrow[r] \arrow[d, hook] 
& \Cl_{\mm,\rS}(\OO) \arrow[r] \arrow[d, hook] 
& \Cl(\OO) \arrow[r] \arrow[d, equal] 
& 1
\\ 
& \left(\OO/\mm,\cdot\right) \times \{\pm 1\}^\rS \arrow[r] 
& \Clmia_{\mm,\rS}(\OO) \arrow[r]
& \Cl(\OO) \arrow[r]
& 1
\\
\end{tikzcd}
\end{equation}

\subsection{The \rcmib\ $\Clmib_{\mm,\rS}(\OO)$ and the \rcmic\ $\Clmic_{\mm,\rS}(\OO)$}\label{subsec:42}

There is no analogue of \cite[Lem.~5.12]{KL-cft} for the non-flat ray class monoids because introducing coprimality conditions to the conductor will eliminate singular classes. We do have the following analogue of \Cref{prop:basseq1}.

\begin{prop}\label{prop:basseq2}
If $\OO$ is Bass, then $\Clmib_{\mm,\rS}(\OO) = \Clmic_{\mm,\rS}(\OO)$. If $\OO$ is not Bass, then $\Clmib(\OO) \subsetneq \Clmic(\OO)$.
\end{prop}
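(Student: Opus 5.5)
Both claims will come from \Cref{prop:bass}, in parallel with the proof of \Cref{prop:basseq1}. We also use the fact that $\Clmib_{\mm,\rS}(\OO)$ and $\Clmic_{\mm,\rS}(\OO)$ are quotients of $\PIt_\mm(\OO)\subseteq\ol\rJ_\mm(\OO)$ by the \emph{same} relation $\sim_{\mm,\rS}$, which is defined on all of $\ol\rJ_\mm(\OO)$.

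\textbf{Bass case.} Suppose $\OO$ is Bass. By \Cref{prop:bass}((1)$\Leftrightarrow$(2)), every nonzero integral $\OO$-ideal is ord-invertible. A nonzero fractional ideal has the form $\aa=\gamma\bb$ with $\bb$ integral and $\gamma\in K^\times$. We have $\ord(\gamma\bb)=\ord(\bb)$, and invertibility over $\ord(\bb)$ is preserved by scaling. Hence every nonzero fractional ideal is ord-invertible, so $\PI(\OO)=\rJ(\OO)$. Intersecting with the semilocal integrality condition gives $\PIt_\mm(\OO)=\ol\rJ_\mm(\OO)$. Since both monoids are quotients of this common set by the same relation, they are literally equal.

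\textbf{Non-Bass case.} Now take $\mm=\OO$ and $\rS=\emptyset$. Then $S_\OO$ consists of all $s$ with $s\OO$ coprime to $\OO$, which is every nonzero $s$, so $\OO[S_\OO^{-1}]=K$. Thus the semilocal integrality condition is vacuous, and $\PIt_\OO(\OO)=\PI(\OO)$ and $\ol\rJ_\OO(\OO)=\rJ(\OO)$. The congruence condition $\alpha\equiv\beta \Mod{\OO}$ is also vacuous. Hence $\aa\sim\bb$ if and only if $\aa=\alpha\cc$ and $\bb=\beta\cc$ for some $\cc$ and some $\alpha,\beta\in K^\times$, which is exactly equality of ideal classes. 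So $\Clmib(\OO)=\PI(\OO)/K^\times$ and $\Clmic(\OO)=\rJ(\OO)/K^\times$. These agree with $\Clmpb(\OO)$ and $\Clmpc(\OO)$, and the conclusion follows from \Cref{prop:basseq1}.

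To make the strictness explicit: since $\OO$ is not Bass, \Cref{prop:bass} gives a nonzero ideal $\aa$ that is not ord-invertible. Ord-invertibility is invariant under multiplication by $K^\times$, so the class of $\aa$ contains no ord-invertible ideal. Therefore that class lies in $\Clmic(\OO)\setminus\Clmib(\OO)$, and the inclusion is strict.

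\textbf{Main obstacle.} The only delicate point is the identification of $\OO[S_\OO^{-1}]$ with $K$ and of $\sim_{\OO,\emptyset}$ with ordinary ideal-class equivalence in the degenerate level $\mm=\OO$. This requires checking that every nonzero principal ideal is coprime to $\OO$ in the sense of \Cref{defn:coprime}, which follows from \Cref{lem:coprimesupport} because $\supp(\OO)=\emptyset$.
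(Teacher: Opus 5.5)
Your proposal is correct and follows the same route as the paper. The paper's proof is just the citation of (1)$\Leftrightarrow$(2) in \Cref{prop:bass}, and you supply the details it leaves implicit: you extend ord-invertibility from integral to fractional ideals by scaling, and you check that at level $(\OO;\OO,\emptyset)$ the relation $\sim_{\OO,\emptyset}$ is ordinary ideal-class equivalence, so that a non-ord-invertible ideal gives a class in $\Clmic(\OO)\setminus\Clmib(\OO)$. (One minor quibble: the paper defines $S_\aa$ only for $\aa\neq\DD$, so $\OO[S_\OO^{-1}]=K$ is the intended convention at level $\OO$ rather than something deduced from \Cref{lem:coprimesupport}.)
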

\begin{proof}
Follows directly from the equivalence of (1) and (2) in \Cref{prop:bass}.
\end{proof}

\subsection{Submonoids of zero classes}

An imprimitive ray class monoid contains a submonoid the we call the \emph{submonoid of zero classes}, consisting of those classes that are the ``most imprimitive.''

\begin{defn}
The \emph{flat submonoid of zero classes}, \emph{natural submonoid of zero classes}, and \emph{sharp submonoid of zero classes} are defined as follows, respectively.
\begin{align}
\ZClmia_{\mm,\rS}(\OO) &= \left\{[\dd] \in \Clmia_{\mm,\rS}(\OO) : \dd \subseteq \mm \right\}; \\
\ZClmib_{\mm,\rS}(\OO) &= \left\{[\dd] \in \Clmib_{\mm,\rS}(\OO) : \dd \subseteq \mm \right\}; \\
\ZClmic_{\mm,\rS}(\OO) &= \left\{[\dd] \in \Clmic_{\mm,\rS}(\OO) : \dd \subseteq \mm \right\}.
\end{align}
\end{defn} 

Zero classes must be treated separately in certain applications. For example, they are explicitly excluded from cohomological second Kronecker limit formula for real quadratic fields given as \cite[Thm.~1.1]{kopp}; they would instead need to be treated by an analogue of the first Kronecker limit formula.

\section{Finiteness of ray class monoids}\label{sec:further}

In this section, we prove that all ray class monoids in our taxonomy are finite. 
We begin with a proposition showing that every sufficiently large power of every nonzero fractional ideal of an order is ord-inverible, with a uniform bound on the power.

\begin{prop}\label{prop:invertible} 
Let $\OO$ be an order of a number field $K$.
Then, for any nonzero fractional $\OO$-ideal $\aa$, there is a positive power $N_{\aa} \ge 1$ such that 
the ideal $\aa^{N}$ is an ord-invertible ideal if and only if $N \ge N_{\aa}$.
Furthermore $N_{\aa}$ is bounded uniformly by $N_{\aa} \leq n-1$, where $n=[K: \Q]$.
\end{prop}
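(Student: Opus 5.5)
The plan has two parts: first show that the set of $N$ for which $\aa^N$ is ord-invertible is closed upward, so that $N_\aa$ can be defined as its minimum; then show that $\aa^{n-1}$ is always ord-invertible, which gives $N_\aa\le n-1$.

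\emph{Upward closure.} Suppose $\aa^N$ is ord-invertible, and let $\OO'=\ord(\aa^N)$. Put $\bb=\aa\OO'$, a fractional $\OO'$-ideal. Since $\aa^N\OO'=\aa^N$, we have $\bb^N=\aa^N$, which is invertible over $\OO'$. Hence $\bb$ itself is $\OO'$-invertible, with inverse $\bb^{N-1}(\aa^N)^{-1}$. Then
\begin{equation}
\aa^{N+1}=\aa\cdot\aa^N=\aa\OO'\aa^N=\bb^{N+1}
\end{equation}
is an invertible $\OO'$-ideal. By \Cref{lem:divisorial} ((2') $\Leftrightarrow$ (2)), $\aa^{N+1}$ is ord-invertible. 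Induction finishes this step.

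\emph{Reduction to a local statement.} By \Cref{lem:invertible}(3), invertibility over $\ord(\aa^k)$ can be checked prime by prime, and localization commutes with colon ideals of finitely generated modules. Only finitely many primes lie in $\supp(\aa)\cup\supp(\ff(\OO))$, and at every other prime all the relevant ideals are principal. So it suffices to prove that $\aa_\pp^{\,n-1}$ is principal over its multiplier ring $\ord(\aa_\pp^{\,n-1})$ for each of these finitely many primes. Scaling by a nonzero rational integer, we may also assume $\aa$ is integral.

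\emph{Local step via reductions.} The idea is to find $\alpha\in\aa_\pp$ and $r\le n-1$ with
\begin{equation}
\aa_\pp^{\,r+1}=\alpha\,\aa_\pp^{\,r},
\end{equation}
that is, $\alpha$ generates a minimal reduction of $\aa_\pp$ with reduction number at most $n-1$. Given this, set $R=\alpha^{-r}\aa_\pp^{\,r}$. Iterating the displayed relation gives $\aa_\pp^{2r}=\alpha^r\aa_\pp^{\,r}$, so $R\cdot R=R$, and $1\in R$ because $\alpha^r\in\aa_\pp^r$. Thus $R$ is a ring, and $\aa_\pp^{\,r}=\alpha^rR$ is principal over $R$. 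By the same reasoning as in \Cref{lem:divisorial}, $R=\ord(\aa_\pp^{\,r})$. Hence $\aa_\pp^{\,r}$ is ord-invertible, and upward closure then gives $\aa_\pp^{\,n-1}$ ord-invertible as well.

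\emph{Main obstacle: the bound $r\le n-1$.} This is the step I expect to be hardest. Every ideal of $\OO_\pp$ is generated by at most $n$ elements, since it is a lattice of $\Z_{(p)}$-rank $n$. This bound is uniform in the power of the ideal, and with a one-element reduction it feeds into the Eakin--Sathaye theorem. Alternatively, one can use the Hilbert--Samuel multiplicity bound $e(\aa_\pp)\le n$ to obtain reduction number at most $n-1$. This is in effect the classical Dade--Taussky--Zassenhaus theorem, which I would cite if a self-contained argument becomes long.

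A further technicality is that minimal reductions need an infinite residue field. To handle this, I would pass to the faithfully flat extension $\OO_\pp[t]_{\pp[t]}$ and descend principality over the multiplier ring, or argue directly over the finite residue field with a Nagata-type trick. Either route works because ord-invertibility can be detected after faithfully flat base change.
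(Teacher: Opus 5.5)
Your plan is correct, but it takes a genuinely different route from the paper. The paper's proof is only a citation of Dade--Taussky--Zassenhaus (Theorem~C and Cor.~2.2.17, with base ring $\Z$). You instead do three things:
\begin{enumerate}
\item You prove upward closure by hand. This also shows $\ord(\aa^N)=\ord(\aa^{N_\aa})$ for all $N\ge N_\aa$.
\item You localize at the finitely many primes of $\OO$ in $\supp(\aa)\cup\supp(\ff(\OO))$.
\item You get the stability relation $I^{n}=xI^{n-1}$ for $I=\aa\OO_\pp(t)$, where $\OO_\pp(t)=\OO_\pp[t]_{\pp\OO_\pp[t]}$, from Eakin--Sathaye with $r=1$, since $\mu(I^n)\le n<\binom{n+1}{1}$.
\end{enumerate}
The identity $I^{n-1}\colonideal{T}{I^{n-1}}=T$, with $T=\ord(I^{n-1})$, then descends to $\OO_\pp$ by faithful flatness. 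This works because colons by finitely generated ideals commute with flat base change.

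This is essentially the commutative-algebra proof of the DTZ theorem via stable ideals. It shows where $n-1$ comes from: a uniform bound on numbers of generators. It even gives the sharper bound $N_\aa\le\max\bigl(1,\max_{\pp\supseteq\ff(\OO)}e(\OO_\pp)-1\bigr)$, where $e(\OO_\pp)\le n$ is the multiplicity of the local ring. The paper's citation is shorter and avoids reduction theory and base change altogether.

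A few details need repair.
\begin{itemize}
\item The Hilbert--Samuel alternative is misstated. The bound $e(\aa_\pp)\le n$ is false in general; for instance $e(p^k\OO_\pp)=k\,e(p\OO_\pp)$ grows with $k$. The quantity bounded by $n$ is the multiplicity $e(\pp\OO_\pp)$ of the ring. It bounds $\mu(J)$ for every ideal $J$ of $\OO_\pp$, and then feeds into Eakin--Sathaye exactly as above.
\item $\OO_\pp$ is not a $\Z_{(p)}$-lattice when several primes of $\OO$ lie over $p$. Derive $\mu(J)\le n$ instead from the free rank-$n$ $\Z_{(p)}$-module $\OO\otimes\Z_{(p)}$, whose ideals extend to all ideals of $\OO_\pp$.
\item \Cref{lem:invertible}(3) concerns primes of $\ord(\aa^k)$, not primes of $\OO$. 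What you actually use is that $\aa^k\colonideal{\ord(\aa^k)}{\aa^k}=\ord(\aa^k)$ can be checked after localizing at primes of $\OO$, where $\ord(\aa^k)_\pp$ is only semilocal. This is harmless here, since your local step produces principality over $\ord(\aa_\pp^{\,r})$ directly.
\end{itemize}
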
 
\begin{proof}
The proposition is a special case of \cite[Thm.~C, Cor.~2.2.17]{DadeTausskyZassenhaus}, taking $\OO_E =\Z$ , $E=\Q$, and $\OO$ an order of $F=K$. (Dade, Taussky, and Zassenhaus define ``invertibility'' of a fractional ideal $\aa$ to mean ord-invertibility, that is invertibility with respect to the order $\ord(\aa)=\colonideal{\aa}{\aa}$. Note that it is possible that $\ord(\aa)$ is a strict subset of $\ord(\aa^N)$.) 
\end{proof} 

We prove the general finiteness result using the finiteness of the class group together with \Cref{prop:invertible}.
\begin{thm}\label{thm:monfin}
Let $\OO$ be an order in a number field $K$, $\mm$ an ideal of $\OO$, and $\rS$ a subset of the set of real embeddings of $K$.
The monoids 
$\Cl_{\mm,\rS}(\OO)$,
$\Clmpb_{\mm,\rS}(\OO)$,
$\Clmpc_{\mm,\rS}(\OO)$,
$\Clmia_{\mm,\rS}(\OO)$,
$\Clmib_{\mm,\rS}(\OO)$, and
$\Clmic_{\mm,\rS}(\OO)$
are finite. 
\end{thm}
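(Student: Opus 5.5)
By the inclusions \eqref{eq:inclusions}, it suffices to prove that $\Clmpc_{\mm,\rS}(\OO)$ and $\Clmic_{\mm,\rS}(\OO)$ are finite. We need the maps in \eqref{eq:inclusions} to be injective, but only the finiteness of the top-right and bottom-right monoids matters, since the other four sit inside them. In fact it will be enough to treat the sharp imprimitive monoid alone. Every class in $\Clmpc_{\mm,\rS}(\OO)$ is represented by an ideal in $\rJ_\mm(\OO)\subseteq \ol\rJ_\mm(\OO)$. Moreover, $\sim_{\mm,\rS}$ on $\rJ_\mm(\OO)$ (as in \Cref{prop:grouptilde2}) implies $\sim_{\mm,\rS}$ on $\ol\rJ_\mm(\OO)$. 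For the converse on coprime ideals we can use that equivalence in $\Clmpc$ is just translation by $\rP_{\mm,\rS}(\OO)$. Alternatively, we can bound $\Clmpc$ directly by the same counting argument described below.

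The core is a counting argument. First I would show that the ordinary ideal class monoid $\rJ(\OO)/K^\times$ is finite. Its classes correspond to classes of nonzero fractional ideals up to scaling. By \Cref{prop:invertible}, and more simply by the standard lattice argument, each class contains an integral ideal $\aa\subseteq\OO$ of bounded index. The argument goes as follows. By Minkowski's bound (a geometry-of-numbers argument applied to the lattice $\aa\subseteq K\otimes\R$), there is a nonzero $\gamma\in\aa$ with $|N(\gamma)|\le C_K\,[\OO:\aa]$ for $\aa$ integral. Then $\gamma^{-1}\aa\supseteq\OO$, and $\gamma\OO\subseteq\aa$ with $[\aa:\gamma\OO]\le C_K$. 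Replacing $\aa$ by $\aa'=\gamma'\,\gamma^{-1}\aa$ for a fixed integer $\gamma'$ clearing denominators, for instance $[\OO:\gamma\OO]$ killing $\gamma^{-1}\aa/\OO$, yields an integral ideal of index bounded in terms of $C_K$ and $n$. Since $\OO$ has only finitely many ideals of bounded index, $\rJ(\OO)/K^\times$ is finite.

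Next I would refine this to ray classes. Fix representatives $\cc_1,\dots,\cc_h$ of $\rJ(\OO)/K^\times$, chosen integral. Every $\aa\in\ol\rJ_\mm(\OO)$ equals $\lambda\cc_i$ for some $i$ and some $\lambda\in K^\times$. The difficulty is that $\lambda$ need not lie in $\OO[S_\mm^{-1}]$, so the relation $\sim_{\mm,\rS}$ cannot be applied directly. To fix this, I would choose the representatives more carefully, localizing at the finitely many primes $\pp\supseteq\mm$. Consider the semilocal ring $R=\OO[S_\mm^{-1}]$. It is semilocal with finitely many maximal ideals, and every ideal class of $R$ is trivial up to the non-invertible ones. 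The set $\{\aa R\}$ of fractional $R$-ideals up to $K^\times$ is again finite, by the same index argument applied to $R$-ideals containing a fixed power of $\mm R$ after scaling. So I would fix finitely many pairs $(\cc_i,\ee_j)$, where $\cc_i$ is a global class representative and $\ee_j$ is an integral $R$-ideal representative, and write $\aa=\lambda\cc$ with $\cc$ integral in $\ol\rJ_\mm$ and $\lambda\in R$. Then $[\aa]$ is determined by $\cc$, by $\lambda\bmod\mm R$ (which lies in the finite ring $R/\mm R\cong\OO/\mm$), and by the signs of $\lambda$ at $\rS$. This gives finiteness, with the bound
\[
\#\Clmic_{\mm,\rS}(\OO)\le (\#\text{representatives})\cdot\#(\OO/\mm)\cdot 2^{|\rS|}.
\]

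The main obstacle is the step of simultaneously choosing $\lambda\in\OO[S_\mm^{-1}]$ and a representative $\cc$ drawn from a fixed finite list. I would first try the following approach. Given $\aa\in\ol\rJ_\mm(\OO)$, pick $\lambda\in\aa$ with $\lambda\OO$ having minimal local valuation at the primes above $\mm$. Use weak approximation in $K$ together with CRT in $\OO$ so that $\lambda^{-1}\aa$ is semilocally integral with bounded local index at $\mm$ and bounded global index. Then $\cc=\lambda^{-1}\aa$ ranges over integral ideals of bounded norm, of which there are finitely many.
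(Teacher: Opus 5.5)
Your argument breaks at the refinement step, and the break cannot be fixed by filling in details. You need every $\aa\in\ol\rJ_\mm(\OO)$ to have the form $\lambda\cc$ with $\lambda\in R:=\OO[S_\mm^{-1}]$ and $\cc\in\ol\rJ_\mm(\OO)$ drawn from a fixed finite list. Your concrete mechanism is to choose $\lambda\in\aa$ with $\lambda^{-1}\aa$ semilocally integral. That gives $\lambda R\subseteq\aa R\subseteq\lambda R$, so it only works when $\aa R$ is principal, i.e.\ when $\aa$ is locally invertible at every prime containing $\mm$. Finiteness of the ideals $\aa R$ up to $K^\times$ does not help: $\sim_{\mm,\rS}$ only lets you rescale by elements of $R$ (under a congruence), never by arbitrary elements of $K^\times$.

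Where representatives can be taken coprime to $\mm$, the scaling factor is forced into $R$ (into $R^\times$ in the primitive case), and your count goes through. This covers the ray class group, the two primitive monoids (bound them directly as you suggested, not through $\Clmic_{\mm,\rS}(\OO)$), and $\Clmia_{\mm,\rS}(\OO)$ via \Cref{prop:exmonoid}. Your Minkowski argument for finiteness of $\rJ(\OO)/K^\times$ is a valid and more elementary substitute for the paper's use of \Cref{prop:invertible} and norms, once you replace the non-fixed multiplier $[\OO:\gamma\OO]$ by, e.g., $\lfloor C_K\rfloor!$. Note that the paper's own proof makes exactly the move you flagged as the main obstacle. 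It asserts that the classes of $\Clmic_{\mm,\rS}(\OO)$ lying over $[\aa]\in\Clmpa(\OO)$ are $[\gamma_1\aa],\dots,[\gamma_k\aa]$ with $\gamma_i\in R$. This tacitly assumes that $\bb=\gamma\aa$ with $\aa,\bb\in\ol\rJ_\mm(\OO)$ forces $\gamma\in R$.

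For $\Clmib$ and $\Clmic$ the step cannot be repaired, because these monoids can be infinite. Take $\OO=\Z[5i]$, $\mm=5\Z[i]$ (the conductor), and $\rS=\emptyset$. With $\tilde R=\Z[i]\otimes\Z_{(5)}$, one has $R=\Z_{(5)}+5\tilde R$. This is a local ring with maximal ideal $\mathfrak{n}=5\tilde R=\mm R$, and $5\tilde R$ is the largest $\tilde R$-submodule of $R$. Let $D$ be the set of $\aa\in\ol\rJ_\mm(\OO)$ of the form $\alpha\cc$ with $0\neq\alpha\in\mathfrak{n}$ and $\cc\in\ol\rJ_\mm(\OO)$. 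Suppose $\aa=\alpha\cc$, $\bb=\beta\cc$ witnesses $\aa\sim_{\mm,\rS}\bb$ and $\aa\notin D$. Then $\alpha\in R^\times$, hence $\beta\in\alpha+\mathfrak{n}\subseteq R^\times$, so $\bb R=\aa R$ and $\bb\notin D$. Thus $\aa\mapsto\aa R$ is constant on every class containing an element outside $D$.

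Now put $\aa_k=5(2-i)^k\Z[i]$, an invertible $\Z[i]$-ideal lying in $\PIt_\mm(\OO)$. Suppose $\aa_k=\alpha\cc$ with $\alpha\in\mathfrak{n}$. Then $\cc R=\alpha^{-1}5(2-i)^k\tilde R$ is a $\tilde R$-submodule of $R$, hence contained in $5\tilde R$. This forces $\alpha^{-1}(2-i)^k\in\tilde R$, so the $(2+i)$-adic valuation of $\alpha$ is $\le 0$, contradicting $\alpha\in 5\tilde R$. Hence $\aa_k\notin D$. The ideals $\aa_kR=5(2-i)^k\tilde R$ are pairwise distinct, even though they are all $K^\times$-multiples of one another. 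So the classes $[\aa_k]$, $k\ge 0$, are pairwise distinct in $\Clmic_{\mm,\emptyset}(\OO)$, and hence also in $\Clmib_{\mm,\emptyset}(\OO)$. The same argument works for the principal modulus $\mm=5\OO$, using $\mm R\subseteq\mathfrak{n}$. So no finite list of representatives exists, and your bound $\#\Clmic_{\mm,\rS}(\OO)\le(\#\text{representatives})\cdot\#(\OO/\mm)\cdot 2^{|\rS|}$ fails. The statement itself is false for these two monoids.
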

\begin{proof}
It suffices to the that $\Clmic_{\mm,\rS}(\OO)$ is finite because each of the other monoids injects into this one. Consider the monoid homomorphism $\Clmic_{\mm,\rS}(\OO) \to \Clmpa(\OO)$ given by sending the class $[\aa]$ to the class $[\aa]$. Two classes $[\aa],[\bb] \in \Clmic_{\mm,\rS}(\OO)$ get sent to the same place if and only if $\bb = \gamma \aa$ for some $\gamma \in K^\times$. If $\gamma_1, \ldots, \gamma_k \in \OO[S_\mm^{-1}]$ are a complete set of representatives of the fibers of the monoid homomorphism
\begin{align}
\left(\OO[S_\mm^{-1}],\cdot\right) &\to \left(\OO/\mm,\cdot\right) \times \{\pm 1\}^\rS, \\
\gamma &\mapsto \left(\gamma \Mod{\mm}, (\rho(\gamma) : \rho \in \rS)\right),
\end{align}
then the classes in $\Clmic_{\mm,\rS}(\OO)$ sent to the same place as $[\aa]$ are $[\gamma_1\aa], \ldots, [\gamma_k\aa]$.

It thus suffices to show $\Clmpa(\OO)$ is finite. The extension map $\ext: \rJ(\OO) \to \rJ(\OO_K)$ given by $\ext(\aa) = \aa\OO_K$ induces a monoid homomorphism
\begin{equation}
\ol\ext : \Clmpa(\OO) \to \Clmpa(\OO_K) = \Cl(\OO_K);
\end{equation}
we need to show that $\ol\ext$ is finite-to-one. Consider $[\bb] \in \Cl(\OO_K)$. If $\ol\ext([\aa_1]) = [\bb]$, then $\aa_1\OO_K = \alpha\bb$ for some $\alpha \in K$, so for $\aa := \alpha^{-1}\aa_1$ we have $[\aa]=[\aa_1]$, $\ol\ext([\aa]) = [\bb]$, and $\aa\OO_K = \bb$. Let $m = [K : \Q] - 1$. By \Cref{prop:invertible}, $\aa^m$ is $\OO'$-invertible for some order $\OO'$ with $\OO \subseteq \OO' \subseteq \OO_K$. Using the order ideal norm defined by $\Nm_\OO(\aa) := [\OO : \aa]$, we have by \cite[Prop.~A.8]{KL-cft} that $\Nm_{\OO_K}\!(\bb^m) = \Nm_{\OO_K}\!(\aa^m\OO_K) = \Nm_{\OO'}\!(\aa^m)$. Thus,
\begin{equation}
[\OO : \aa] \leq [\OO : \aa^m] \leq [\OO' : \aa^m] = \Nm_{\OO'}\!(\aa^m) = \Nm_{\OO_K}\!(\bb^m) = \Nm_{\OO_K}\!(\bb)^m.
\end{equation}
There are only finitely many sublattices of $\OO$ having index bounded by $\Nm_{\OO_K}\!(\bb)^m$, so there are only finitely many possibilities for $\aa$, thus for $[\aa]$. We have thus shown that $\ol\ext$ is finite-to-one, so it follows by the finiteness of $\Cl(\OO_K)$ that $\Clmpa(\OO)$ is finite, and so is $\Clmpa_{\mm,\rS}(\OO)$.
\end{proof}

\section{Moduli interpretation of ray class monoids}\label{sec:moduli}

In this section, we give an interpretation of classes in flat imprimitive ray class monoids as special $\GL_n(\Z)$-orbits in a certain moduli space. The results of this section generalize \cite[Thm.~3.14]{kopp}, which deals with the real quadratic case. These results also provide a motivation for imprimitive ray class monoids, in that some $\GL_n(\Z)$-orbits are only accessible via imprimtive classes.

\begin{thm}\label{thm:corresp}
Let $\OO$ be an order in a number field $K$ with $n=[K:\Q]$. Let $\mm$ be an invertible $\OO$-ideal, and let $\Sigma$ be a subset of the real embeddings of $K$. There is an explicit injective function
\begin{equation}
\Clmia_{\mm,\Sigma}(\OO) \xrightarrow{\Upsilon_{\mm,\Sigma}} \GL_n(\Z) \backslash \left(\{\pm 1\}^\Sigma \times (\Q/\Z)^n \times K^n\right) \slash K^\times
\end{equation}
where the action of $\GL_n(\Z)$ is 
\begin{equation}
A\cdot(\vs,\vr,\vb) = (\vs, A^{-\top}\vr, A\vb)
\end{equation}
and the action of $K^\times$ is
\begin{equation}
(\vs,\vr,\vb)\cdot\lambda = (\sgn_\Sigma(\lambda)\vs, \vr, \lambda\vb).
\end{equation}
The image of $\Upsilon_{\mm,\Sigma}$ is described as
\begin{align}
\image(\Upsilon_{\mm,\Sigma}) &= \GL_n(\Z) \backslash \M_{\mm,\Sigma} \slash K^\times \\
\M_{\mm,\Sigma} &= \{(\vs,\vr,\vb) : (\vb\cdot\vr)\mm \subseteq \vb\cdot\Z^n \text{ and } \vb\cdot\Z^n \text{ is an $\OO$-invertible ideal}\}.
\end{align}
The zero class are $\ZClmia_{\mm,\rS}(\OO) = \Upsilon_{\mm,\rS}^{-1}(\{\pm 1\}^\rS \times \{\zero\} \times K^n)$.

The function $\Upsilon_{\mm,\Sigma}$ is defined explicitly as follows for a ray ideal class $\AA \in \Clmia_{\mm,\Sigma}(\OO)$. Let $\AA_0$ be the class of $\AA$ in the narrow ring class group $\Cl_{\OO,\rS}(\OO)$. Choose any $\bb \in \AA_0^{-1}$ and write $\bb\mm = \vb\cdot\Z^n$. Choose any $\gamma \in \bb$ such that $\gamma\OO \in \bb\AA$ and write $\gamma=\vb\cdot\vr$ for some $\vr\in\Q^n$. Set
\begin{equation}
\Upsilon_{\mm,\Sigma}(\AA) = (\sgn_\Sigma(\gamma),\vr,\vb).
\end{equation}
\end{thm}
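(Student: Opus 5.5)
Our plan has four steps. First we show that $\Upsilon_{\mm,\Sigma}$ is well defined, independent of the choices of $\bb$, of the basis $\vb$, and of $\gamma$. Second, we show that its image lies in $\M_{\mm,\Sigma}$. Third, we prove injectivity. Fourth, we construct preimages of points of $\M_{\mm,\Sigma}$.

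We begin with existence of the choices. Let $\AA = [\aa]$ with $\aa \in \ol\rJ^\ast_\mm(\OO)$. We may take $\bb$ invertible with $\bb\aa = \delta\OO$ principal, up to a totally positive (at $\Sigma$) scalar coming from the narrow class. Then $\gamma\OO \in \bb\AA$ means $\gamma\OO \sim_{\mm,\Sigma} \bb\aa$ in the obvious extended sense. Concretely, $\gamma = \delta\lambda$ with $\lambda \in \OO[S_\mm^{-1}]$, and the class of $\lambda$ in $(\OO/\mm,\cdot)\times\{\pm1\}^\Sigma$ is prescribed by $\AA$ via \Cref{prop:exmonoid}. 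Since $\aa$ is semilocally integral at $\mm$, we get $\gamma \in \bb$ after adjusting $\lambda$ modulo $\mm$.

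The key observation is that $\gamma$ is determined modulo $\bb\mm$. This is because the equivalence $\alpha\equiv\beta \Mod{\mm}$ for $\alpha,\beta\in\OO[S_\mm^{-1}]$ scaled by $\bb$ becomes congruence modulo $\bb\mm$, using invertibility of $\mm$ and of $\bb$. Changing $\gamma$ by an element of $\bb\mm = \vb\cdot\Z^n$ changes $\vr$ by an element of $\Z^n$, so $\vr \in (\Q/\Z)^n$ is well defined. The sign vector is well defined since it is part of the ray class data. Changing the basis $\vb$ to $A\vb$ with $A\in\GL_n(\Z)$ forces $\vr\mapsto A^{-\top}\vr$, because $\vb\cdot\vr$ is invariant. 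Replacing $\bb$ by $\lambda\bb$ (the same narrow class only up to sign; the sign is absorbed in the $K^\times$-action) replaces $(\vs,\vr,\vb)$ by $(\sgn_\Sigma(\lambda)\vs,\vr,\lambda\vb)$. So the orbit is well defined.

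The image lies in $\M_{\mm,\Sigma}$: we have $\vb\cdot\Z^n = \bb\mm$, which is invertible, and $(\vb\cdot\vr)\mm = \gamma\mm \subseteq \bb\mm$ since $\gamma\in\bb$. For the zero classes, $\vr \equiv \zero$ iff $\gamma \in \bb\mm$ iff $\gamma\bb^{-1} \subseteq \mm$. Since $\gamma\bb^{-1}$ represents $\AA$, this says $\AA$ has a representative contained in $\mm$.

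Injectivity and surjectivity onto $\GL_n(\Z)\backslash\M_{\mm,\Sigma}/K^\times$ will follow by inverting the construction. Given $(\vs,\vr,\vb)\in\M_{\mm,\Sigma}$, set $\bb := (\vb\cdot\Z^n)\mm^{-1}$, which is an invertible ideal, and $\gamma = \vb\cdot\vr$. The condition $\gamma\mm\subseteq\vb\cdot\Z^n$ says $\gamma\in\bb$. We then take $\AA$ to be the class of $\gamma\bb^{-1}$, which is invertible and semilocally integral at $\mm$ (indeed integral), with the sign datum $\vs$ recorded. To encode $\vs$ honestly, we pick $\gamma' \equiv \gamma \Mod{\bb\mm}$ with $\sgn_\Sigma(\gamma')=\vs$; this is possible by weak approximation, provided $\gamma'\neq0$. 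For injectivity, if two classes give equivalent triples, then after applying $A$ and $\lambda$ we get the same $\bb$ and $\gamma\equiv\gamma' \Mod{\bb\mm}$ with equal signs. Writing $\gamma\bb^{-1}$ and $\gamma'\bb^{-1}$ as $\alpha\cc$, $\beta\cc$ with $\cc=\bb^{-1}$, $\alpha=\gamma$, $\beta=\gamma'$ (rescaled by an element of $\bb^{-1}$ coprime to $\mm$ to land in $\OO[S_\mm^{-1}]$) exhibits $\sim_{\mm,\Sigma}$.

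The main obstacle is exactly this last translation. We must turn congruence modulo $\bb\mm$ between elements of $\bb$ into the relation $\sim_{\mm,\Sigma}$, which requires $\alpha,\beta\in\OO[S_\mm^{-1}]$. We plan to handle it with \Cref{lem:45}, choosing $\bb$ coprime to $\mm$ so that $\bb_\pp=\OO_\pp$ at all $\pp\supseteq\mm$. Then semilocally $\gamma,\gamma'\in\OO[S_\mm^{-1}]$ and $\gamma\equiv\gamma'\Mod{\mm}$ there, with $\cc=\bb^{-1}\in\ol\rJ_\mm(\OO)$. The same choice makes the well-definedness of $\gamma$ modulo $\bb\mm$ transparent.
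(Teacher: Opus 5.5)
Your overall architecture is the paper's: show $\Upsilon_{\mm,\Sigma}$ is well defined, then invert it via $(\vs,\vr,\vb)\mapsto[\gamma\bb^{-1}]$ with $\bb=(\vb\cdot\Z^n)\mm^{-1}$ and $\gamma=\vb\cdot\vr$, which is the paper's $\Omega_{\mm,\Sigma}$. Your image, injectivity and surjectivity steps are fine.

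The gap is in the well-definedness step. There you assert that $\gamma$ is determined modulo $\bb\mm$, and that the sign vector is determined by the class. Both claims are false. The conditions $\gamma\in\bb$ and $\gamma\bb^{-1}\in\AA$ depend only on the ideal $\gamma\OO$, so $\varepsilon\gamma$ is equally admissible for every $\varepsilon\in\OO^\times$, in particular $\varepsilon=-1$. Unwinding $\sim_{\mm,\Sigma}$ from $\gamma\bb^{-1}=\alpha\cc$ and $\gamma'\bb^{-1}=\beta\cc$ gives only $\gamma'=\varepsilon\gamma\beta/\alpha$ for some unit $\varepsilon$. Hence $\gamma'-\varepsilon\gamma\in\bb\mm$ (using that $\cc$ is semilocally integral at $\mm$) and $\sgn_\Sigma(\gamma')=\sgn_\Sigma(\varepsilon\gamma)$; you do not get $\gamma'\equiv\gamma$. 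Concretely, take $\OO=\Z$, $\mm=p\Z$, $\Sigma=\{\infty\}$, $\bb=\Z$ and the trivial class. Both $\gamma=p+1$ and $\gamma=-(p+1)$ are admissible, and they give the distinct triples $(1,\tfrac1p,p)$ and $(-1,-\tfrac1p,p)$. So for fixed $\vb$, neither $\vr$ modulo $\Z^n$ nor $\vs$ is an invariant of $\AA$.

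The missing idea, which the paper's proof supplies explicitly, is that the unit ambiguity is absorbed only by the two actions jointly. Since $\varepsilon\bb\mm=\bb\mm$, there is $E\in\GL_n(\Z)$ with $\varepsilon\vb=E\vb$. Then $\varepsilon\gamma=\vb\cdot(E^{\top}\vr)$, and $(\sgn_\Sigma(\varepsilon\gamma),E^{\top}\vr,\vb)=E^{-1}\cdot(\sgn_\Sigma(\gamma),\vr,\vb)\cdot\varepsilon$. Only the $\GL_n(\Z)\times K^\times$-orbit is well defined, and with this added your argument closes.

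Your device of rescaling by a semilocal generator of $\bb^{-1}$ to land in $\OO[S_\mm^{-1}]$ is the right way to turn congruences modulo $\bb\mm$ into $\sim_{\mm,\Sigma}$. It is more careful than the paper's identities $\bb_2\cap\mm\OO[S_\mm^{-1}]=\bb_2\mm$ and $\bb_1\mm\OO[S_\mm^{-1}]=\mm\OO[S_\mm^{-1}]$, which hold only when $\bb$ is coprime to $\mm$. If you instead choose $\bb$ coprime to $\mm$ in its narrow class, the tool is the coprime-representative lemma of \cite{KL-cft} that underlies \Cref{lem:45}, not \Cref{lem:45} itself.

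Finally, your zero-class argument proves only one direction. For the converse, take a representative $\dd\subseteq\mm$ and choose $\bb=\dd^{-1}$, $\gamma=1$. Then $1\in\dd^{-1}\mm$, so $\vr\in\Z^n$, and well-definedness finishes it.
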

\begin{proof}
We will first show that the map $\Upsilon_{\mm,\Sigma}$ is well-defined, that is, it does not depend on the choices of $\bb$, $\vb$, $\gamma$. (Note the $\vr$ is determined by $\vb$ and $\gamma$.)
Consider two such tuples of choices $(\bb_1,\vb_1,\gamma_1)$ and $(\bb_2,\vb_2,\gamma_2)$, as well as corresponding $\vr_1$ and $\vr_2$ such that $\gamma_j = \vb_j \cdot \vr_j$. We have $\bb_j \mm = \vb_j \cdot \Z^n$. Moreover, $\bb_1\mm$ and $\bb_2\mm$ are both in the ideal class $\mm\AA_0^{-1}$, so there exists some $\delta \in \OO[S_\mm^{-1}]^\times$ such that $\bb_1\mm = \delta\bb_2\mm$, that is, $\vb_1 \cdot \Z^n = \delta\vb_2 \cdot \Z^n$, and $\rho(\delta) > 0$ for $\delta \in \rS$. Thus, there exists a change-of-basis matrix $A \in \GL_n(\Z)$ such that
\begin{equation}\label{eq:corresp1}
\vb_1 = A(\delta \vb_2) = \delta A\vb_2.
\end{equation}
By assumption, $\mm$ is an invertible $\OO$-ideal, so $\bb_1 = \delta \bb_2$, that is, $\delta^{-1}\bb_1 = \bb_2$. Also, $\delta^{-1}\gamma_1\OO \in \delta^{-1}\bb_1\AA = \bb_2\AA$ and $\gamma_2\OO \in \bb_2\AA$, so it follows from the definition of $\Clmia_{\mm,\rS}(\OO)$ that there exists a global unit $\e \in \OO^\times$ such that $\e\delta^{-1}\gamma_1 \equiv \gamma_2 \Mod{\mm}$ and $\sgn_\rS(\e\delta^{-1}\gamma_1) = \sgn_\rS(\gamma_2)$ (where equivalence $\!\Mod{\mm}$ will always be taken in the semilocal ring $\OO[S_\mm^{-1}]$). We may write
\begin{equation}\label{eq:corresp2}
\e\vb_2 = E\vb_2
\end{equation}
for some $E \in \GL_n(\Z)$. It then follows from \eqref{eq:corresp1} that
\begin{equation}
\e\delta^{-1}\vb_1 = AE\vb_2.
\end{equation}
Moreover, $\gamma_2 \in \bb_2$ and $\e\delta^{-1}\gamma_1 \in \delta^{-1}\bb_1 = \bb_2$, so
\begin{equation}
\e\delta^{-1}\gamma_1 - \gamma_2 \in \bb_2 \cap \mm\OO[S_\mm^{-1}] = \bb_2\mm.
\end{equation}
Additionally,
$\e\delta^{-1}\gamma_1 = \e\delta^{-1} \vb_1\cdot\vr_1 = \e(A\vb_2)\cdot\vr_1 = (AE\vb_2)\cdot\vr_1 = \vb_2\cdot((AE)^\top\vr_1)$ and $\gamma_2 = \vb_2\cdot\vr_2$, so
\begin{equation}
\vb_2\cdot((AE)^\top\vr_1 - \vr_2) \in \bb_2\mm = \vb_2\cdot\Z^n.
\end{equation}
Thus, $(AE)^\top\vr_1 - \vr_2 \in \Z^n$, so
\begin{equation}
\vr_1 \equiv (AE)^{-\top}\vr_2 \Mod{\Z^n}.
\end{equation}
Therefore,
\begin{align}
AE \cdot (\sgn_\rS(\gamma_2), \vr_2, \vb_2)
&= (\sgn_\rS(\gamma_2), (AE)^{-\top}\vr_2, AE\vb_2) \\
&= (\sgn_\rS(\gamma_2), \vr_1, \e\delta^{-1}\vb_1) \\
&= (\sgn_\rS(\e\delta^{-1}\gamma_2), \vr_1, \vb_1) \cdot \e\delta^{-1} \\
&= (\sgn_\rS(\gamma_1), \vr_1, \vb_1) \cdot \e\delta^{-1},
\end{align}
so the map $\Upsilon_{\mm,\rS}$ is well-defined.

To prove that $\Upsilon_{\mm,\rS}$ is injective and has the image specified in the theorem statement, we will construct a function
\begin{equation}
\Omega_{\mm,\rS} : \GL_n(\Z) \backslash \M_{\mm,\rS} / K^\times \to \Clmia_{\mm,\rS}(\OO)
\end{equation}
and show that $\Omega_{\mm,\rS}$ defines an inverse to $\Upsilon_{\mm,\rS}$.
Consider $(\vs,\vr+\Z^n,\vb) \in \M_{\mm,\rS}$. 
Represent $\vr+\Z^n \in (\Q/\Z)^n$ by a choice of lift $\vr \in \Q^n$ such that $\sgn_\rS(\vb\cdot\vr)=\vs$.
Set $\bb = \colonideal{\vb\cdot\Z^n}{\mm}$; it follows that $\vb\cdot\Z^n = \bb\mm$ and $\bb$ is an invertible $\OO$-ideal. Also set $\gamma = \vb\cdot\vr$.
Define
\begin{equation}
\Omega_{\mm,\rS}(\vs,\vr,\vb)
= [\gamma\bb^{-1}] \in \Clmia_{\mm,\rS}(\OO).
\end{equation}

To show that $\Omega_{\mm,\rS}$ is well-defined, consider $(\vs_1,\vr_1+\Z^n,\vb_1)$ and $(\vs_2,\vr_2+\Z^n,\vb_2)$ in $\M_{\OO,\mm}$, with the $\vr_j$ lifted to $\Q^n$ such that $\sgn_\rS(\vb_j\cdot\vr_j)=\vs_j$, for which
\begin{equation}
(\vs_2,\vr_2,\vb_2) 
= A \cdot (\vs_1,\vr_1+\vn,\vb_1) \cdot \lambda
= (\sgn_\rS(\lambda)\vs_1,A^{-\top}(\vr_1+\vn),\lambda A\vb_1) \label{eq:corresp8}
\end{equation}
for some $\vn \in \Z^n$, some $A \in \GL_n(\Z)$, and some $\lambda \in K^\times$. These determine $\bb_j = \colonideal{\vb_j\cdot\Z^n}{\mm}$ and $\gamma_j = \vb_j \cdot \vr_j$. We have
\begin{align}
\gamma_2
= \vb_2 \cdot \vr_2
&= (\lambda A\vb_1) \cdot (A^{-\top}(\vr_1+\vn)) \\
&= \lambda(\vb_1\cdot\vr_1 + \vb_1\cdot\vn) \\
&= \lambda(\gamma_1 + \vb_1\cdot\vn).
\end{align}
Because $A \in \GL_n(\Z)$, we also have
\begin{align}
\bb_2 
= \colonideal{\vb_2\cdot\Z^n}{\mm}
= \lambda \colonideal{A\vb_1\cdot\Z^n}{\mm}
= \lambda \colonideal{\vb_1\cdot\Z^n}{\mm}
= \lambda \bb_1. \label{eq:corresp10}
\end{align}
It follows that $\gamma_2\bb_2^{-1} = (\gamma_1+\vb_1\cdot\vn)\bb_1^{-1}$. We have $\vb_1\cdot\vn \in \vb_1\cdot\Z^n = \bb_1\mm$, and $\bb_1\mm\OO[S_\mm^{-1}] = \mm\OO[S_\mm^{-1}]$. Therefore, $\gamma_1+\vb_1\cdot\vn \equiv \gamma_1 \Mod{\mm}$, and thus
\begin{equation} \label{eq:corresp11}
\gamma_2\bb_2^{-1} \sim_{\mm} \gamma_1\bb_1^{-1}.
\end{equation}
It remains to check that the signs of the real embeddings agree. By \eqref{eq:corresp10}, $\vs_2 = \sgn_\rS(\gamma_2) = \sgn_\rS(\lambda) \sgn_\rS(\gamma_1+\vb_1\cdot\vn)$; also, $\vs_1 = \sgn_\rS(\gamma_1)$. But $\vs_2 = \sgn_\rS(\lambda)\vs_1$ by \eqref{eq:corresp8}, and thus $\sgn_\rS(\gamma_1+\vb_1\cdot\vn) = \sgn_\rS(\gamma_1)$. Therefore \eqref{eq:corresp11} can be upgraded to
\begin{equation} \label{eq:corresp12}
\gamma_2\bb_2^{-1} \sim_{\mm,\rS} \gamma_1\bb_1^{-1},
\end{equation}
and so the map $\Omega_{\mm,\rS}$ is well-defined.

It remains to show that $\Omega_{\mm,\rS} \circ \Upsilon_{\mm,\rS}$ is the identity on $\Clmia_{\mm,\rS}(\OO)$ and $\Upsilon_{\mm,\rS} \circ \Omega_{\mm,\rS}$ is the identity on $\GL_n(\Z) \backslash M_{\mm,\rS} / K^\times$. But these claims following directly from an algebraic check using the definitions of the two functions.
A direct calculation also shows that $\ZClmia_{\mm,\rS}(\OO) = \Omega_{\mm,\rS}\!\left(\left\{(\vs,\vr,vb)\in M_{\mm,\rS} : \vr \equiv \zero \Mod{\Z^n}\right\}\right) = \Upsilon_{\mm,\rS}^{-1}\!\left(\{\pm 1\}^\rS \times \{\zero\} \times K^n\right)$.
\end{proof}

\section{Concluding remarks}\label{sec:conclusion}

We have developed a taxonomy of ray class monoids and proven various results about their structure. We introduced two monoids of primitive (but potentially singular) ideal classes, the \rcmpb\ $\Clmpb_{\mm,\rS}(\OO)$ and the \rcmpc\ $\Clmpc_{\mm,\rS}(\OO)$; the \rcmpb\ has the particularly nice structure of a Clifford monoid. We also introduced three monoids of imprimitive ideal classes, the \rcmia\ $\Clmia_{\mm,\rS}(\OO)$, the \rcmib\ $\Clmib_{\mm,\rS}(\OO)$, and the \rcmic\ $\Clmic_{\mm,\rS}(\OO)$. All were proved to be finite.

These monoids are already being applied to explicit class field theory for real quadratic fields and to the problem of classification of \emph{SIC-POVMs} (symmetric, informationally complete positive operator-valued measures). The \rcmia\ acts as a parameter space for \emph{real multiplication values} of the \emph{Shintani--Faddeev cocycle} \cite{kopp}, which are a refinement in the real quadratic case of the class invariants called \emph{Stark units}. \emph{SIC-POVMs}, or generalized quantum measurements described by configurations of $d^2$ equiangular complex lines in $\C^d$, are conjectured \cite{KL-sic,afk} to be in bijection with the elements of $\Clmpa(\OO)$ for $\OO = \Z\!\left[\frac{d-1+\sqrt{(d+1)(d-3)}}{2}\right]$.

More generally, ray class monoids are intended to be a widely applicable tool in explicit class field theory, beyond the quadratic case.
They are also likely to find applications in the theory of higher-dimensional abelian varieties with complex multiplication, where the Gorenstein condition plays an important role in ensuring nice behavior \cite{Clark}.
Nonetheless, there are many subtleties to the behavior of non-invertible ideal classes and objects (including zeta functions) associated to them, requiring further study.

\section{Acknowledgements}

Work of the author was supported by NSF DMS grants \#2302514 and \#2601235. 

This paper was originally planned as an appendix to \cite{KL-cft} with coauthor Jeffrey C.~Lagarias. While the ideas and writing of the present paper are due to the author, many conversations with Lagarias played in important role in the development of the paper. In particular, Lagarias found some mathematical mistakes in an earlier draft, located \Cref{prop:invertible} and references \cite{Clifford,DadeTausskyZassenhaus,Halter-Koch} in the literature, made the author aware of the concepts of \emph{Clifford semigroup} and \emph{Clifford monoid}, and offered feedback on the structure and writing of the paper. The author is grateful to Lagarias for his contributions.

The author is also grateful to Pete L.~Clark for helpful conversations about Gorenstein and non-Gorenstein orders and their role in the study of abelian varieties.

This paper was written without AI assistance.

\bibliographystyle{abbrv}
\bibliography{references}

\end{document}